%------------------------------------------------------------------------------
% Here please write the date of submission of paper or its revisions:
%------------------------------------------------------------------------------
%
\documentclass[12pt, reqno]{amsart}
\usepackage{amsmath, amsthm, amscd, amsfonts, amssymb, graphicx, color}
\usepackage[bookmarksnumbered, colorlinks, plainpages]{hyperref}

\textheight 22.5truecm \textwidth 14.5truecm
\setlength{\oddsidemargin}{0.35in}\setlength{\evensidemargin}{0.35in}

\setlength{\topmargin}{-.5cm}

\newtheorem{theorem}{Theorem}[section]
\newtheorem{lemma}[theorem]{Lemma}

\newtheorem{corollary}[theorem]{Corollary}
\theoremstyle{definition}

\newtheorem{example}[theorem]{Example}

\theoremstyle{remark}
\newtheorem{remark}[theorem]{Remark}
\numberwithin{equation}{section}

\begin{document}
\setcounter{page}{1}

\title[Generalized derivations on certain Banach algebras]{Generalized derivations on certain Banach algebras}

%\author[A. Ebrahimzadeh Esfahani and M. Nemati ]{  Ali Ebrahimzadeh Esfahani$^1$ and Mehdi Nemati$^{1,2}$}
\author[A. Ebrahimzadeh Esfahani]{  Ali Ebrahimzadeh Esfahani}
\address{Department of Mathematical Sciences,
     Isfahan Uinversity of Technology, Isfahan 84156-83111, Iran;}
\email{\textcolor[rgb]{0.00,0.00,0.84}
{ ali.ebrahimzadeh@math.iut.ac.ir}}

\author[  M. Nemati]{  Mehdi Nemati}
\address{Department of Mathematical Sciences,
	Isfahan Uinversity of Technology,
	Isfahan 84156-83111, Iran;\\
	 \newline
	 and
	 \newline
	 School of Mathematics,
	 Institute for Research in Fundamental Sciences (IPM),
	 P.O. Box: 19395--5746, Tehran, Iran.}
\email{\textcolor[rgb]{0.00,0.00,0.84}{m.nemati@iut.ac.ir}}

%\thanks{This research was in part supported by a grant from IPM (No. 92470046)}

%\dedicatory{This paper is dedicated to Professor ABCD}

\subjclass[2010]{Primary; 47B47, 47B48 Secondary; 46H05, 43A30.}

\keywords{Generalized  derivation,  Radical, Singer-Wermer conjecture, Orthogonal generalized derivation.}

\begin{abstract}
Let ${\mathcal A}$ be a Banach algebra with the properties that
$\mathrm{rad}({\mathcal A})={\rm rann}({\mathcal A})$ and the algebra ${\mathcal A}/\mathrm{rad}({\mathcal A})$ is commutative.
We show that a derivation of ${\mathcal A}$ maps ${\mathcal A}$ into  ${\rm rad}({\mathcal A})$. Using this, we determine among other things when a generalized derivation of ${\mathcal A}$ maps ${\mathcal A}$ into ${\rm rad}({\mathcal A})$. We also study $k$-centralizing generalized derivations of ${\mathcal A}$. Then, for a generalized derivation $(\delta, d)$ of ${\mathcal A}$ we obtain a necessary and sufficient
condition for $(\delta^2, d^2)$ to be still a
generalized derivation of ${\mathcal A}$. 
The main applications are concerned with the algebras over locally compact  groups. In particular, we deduce these results for  bidual of Fourier algebras of  discrete amenable groups as an application of our approach.
\end{abstract} \maketitle

\section{Introduction and preliminaries}
 For a Banach algebra ${\mathcal A}$, we denote by ${\rm rad}({\mathcal A})$ the (Jacobson) radical of ${\mathcal A}$ and by ${\rm rann}( {{\mathcal A}})$
the { right annihilator} of ${\mathcal A}$; i.e. the set of all $c\in{\mathcal A}$ such that $ac=0$ for all $a\in{\mathcal A}$. Note that ${\rm rann}({\mathcal A})$ is nilpotant and so ${\rm rann}({\mathcal A})\subseteq\mathrm{rad}({\mathcal A})$; see for example \cite[Proposition 1.5.6]{dales}. A linear map $d: {\mathcal A}\rightarrow {\mathcal A}$ is called a derivation of ${\mathcal A}$ if $d(ab) = d(a)b + ad(b)$ for all $a,b\in{\mathcal A}$.
For each $a\in {\mathcal A}$, the
map $d_a: {\mathcal A}\rightarrow {\mathcal A}$ defined by $d_a(b)=[b, a]=ba-ab$ for all $b\in {\mathcal A}$,
is a derivation which is called the inner derivation induced by $a$.
A linear map $\delta: {\mathcal A}\rightarrow {\mathcal A}$ is called a generalized derivation of ${\mathcal A}$ if there exists a derivation
$d$ of ${\mathcal A}$ such that $\delta(ab) = a\delta(b) + d(a)b$ for all $a,b\in{\mathcal A}$. This coupling
is denoted by $(\delta, d)$.

A well-known theorem of Singer and Wermer  states that every bounded derivation on
a commutative Banach algebra has its image in the radical \cite{sing}.
About 30 years later, Thomas  extended the Singer-Wermer theorem to arbitrary, not necessarily
bounded, derivations \cite{thomas}. A number of authors have generalized this theorem in several
ways; see \cite{R77,mathieu,R1888,R21}. For example, a well-known result in \cite{mathieu} states that if $d$ is a bounded derivation of a Banach algebra ${\mathcal A}$ such that 
$[a, d(a)]\in Z({\mathcal A})$ for all $a\in{\mathcal A}$, then $d({\mathcal A})\subseteq{\rm rad}({\mathcal A})$.
Later in \cite{R1888} the authors prove the following:
If $d$ is a derivation of a Banach algebra ${\mathcal A}$ such that 
$[a, d(a)]\in Z({\mathcal A})$ for all $a\in{\mathcal A}$, then $d({\mathcal A})\subseteq{\rm rad}({\mathcal A})$.

Along these lines of research, Breasar and Mathieu \cite[Theorem 2.8]{R77} obtained a necessary and sufficient
condition for a derivation to be spectrally bounded on a unital
Banach algebra.

Throughout the paper ${\mathcal A}$ denotes a Banach algebra with the properties that
$\mathrm{rad}({\mathcal A})={\rm rann}({\mathcal A})$ and the algebra ${\mathcal A}/\mathrm{rad}({\mathcal A})$ is commutative. In harmonic analysis, some examples of (classes of) Banach algebras have been discovered
for which $\mathrm{rad}({\mathcal A})={\rm rann}({\mathcal A})$ and the algebra ${\mathcal A}/\mathrm{rad}({\mathcal A})$ is commutative. The most notable cases are $ L_0^\infty(G)^*$
when $G$ is an abelian locally compact group \cite{magh} and $VN(G)^*$
when $G$ is a discrete group \cite{lauff}.

As we shall see, the situation where the Banach algebra ${\mathcal A}$ satisfying
these conditions  provides a framework which allows us to
apply the well-known results concerning derivations and generalized derivations to  ${\mathcal A}$  and investigate the truth of all previous results
for ${\mathcal A}$. 

The organization of this paper is as follows. In Section 2, we consider an introverted subspace $X$ of $VN(G)$, the von
Neumann algebra generated by the left regular representation of $G$, with the following properties
\begin{eqnarray}\label{m}
	C_\lambda^*(G) \subseteq {X},\quad A(G) \cdot {X} \subseteq C_\lambda^*(G).
\end{eqnarray}
For the Banach algebra $X^*$,
equipped with the Arens product, we show that $\mathrm{rad}(X^*)={\rm rann}(X^*)$ and the algebra ${X^*}/\mathrm{rad}(X^*)$ is commutative. We also prove that $X^*$ is neither commutative nor
semiprime when $X\neq C^*_\lambda(G)$.

In Section 3, we first show that every derivation
of the Banach algebra ${\mathcal A}$ satisfying
the needed conditions, has its image in the radical. Using this, we prove that when ${\mathcal A}$ has a right identity every generalized derivation of ${\mathcal A}$ is spectrally bounded. We also show that a generalized derivation $(\delta, d)$ of ${\mathcal A}$ has its image in the radical if and only if $\delta=d$ and this holds if and only if $\delta$ is spectrally infinitesimal.

In Section 4, we investigate Posner's second theorem \cite{pos} and
show that the zero map is the only centralizing derivation of ${\mathcal A}$.
We characterize the space of all inner derivations of ${\mathcal A}$ and prove that ${\mathcal A}$ is commutative
if and only if any  derivation of ${\mathcal A}$ is zero. We also investigate our results for $k$-centralizing generalized derivations and prove similar results. For a generalized derivation  $(\delta, d)$ of ${\mathcal A}$ we obtain a necessary and sufficient
condition for $(\delta^2, d^2)$ to be still a
generalized derivation of ${\mathcal A}$.

In Section 5, the problem of $k$-skew centralizing generalized derivations of ${\mathcal A}$ are discussed.

We shall now fix some notation. Throughout this paper, $G$ denotes a locally compact group. Let $ B(G) $  denote the {Fourier-Stieltjes algebra}   of  $G$ consisting of all coefficient
functions  arising from all the weakly continuous unitary representations of $ G $.
 Then $B(G)$ can be
identified with the dual of $C^*(G)$, the group $C^*$-algebra of $G$. Also,
$B(G)$ with pointwise multiplication and the dual norm is a commutative semisimple Banach
algebra.

%Let $B(L^2(G))$ be the algebra of bounded linear operators from $L^2(G)$ into $L^2(G)$
%and let $VN(G)$ denote the weak operator topology closure of the linear span of $\{\lambda(a): a\in G\}$ in $ B(L^2(G))$, where $\lambda(a)f(x)=f(a^{-1}x)$ for all $x\in G$ and $f\in L^2(G)$.

 Let  $P(G)$  be the set of all continuous positive definite functions on $G$. Let $P_\lambda(G)$ denote the closure of $P(G)\cap C_c(G)$ in the compact open topology,
where $C_c(G)$ is the set of all continuous functions on $G$ with compact support, and let
$B_\lambda(G)$ denote the linear span of $P_\lambda(G)$. Then $B_\lambda(G)$ is a closed ideal in $B(G)$ and
$B_\lambda(G)$ is precisely the dual of $C^*_\lambda(G)$, where $C^*_\lambda(G)$
is the norm closure of $\{\lambda(f): f\in L^1(G)\}$ in the operator algebra $ B(L^2(G))$, and $\lambda(f)(g) = f*g$ for all
$g\in L^2(G)$.

The Fourier algebra $A(G)$ is the norm-closed linear span of $P(G)\cap
C_c(G)$ in $B(G)$. Then $A(G)$ is a closed ideal in $B(G)$ and $A(G)\subseteq B_\lambda(G)$; see \cite[Page 208]{eym}. Furthermore, the dual of $ A(G) $ is isometrically isomorphic to $VN(G)$, the {group von Neumann algebra} which is generated by $\lambda$ in $ B(L^2(G))$. It is known from \cite[Theorem 1.4.1]{jadid2} that $B_\lambda(G)= B(G)$ if and only if $G$ is amenable, or equivalently $A(G)$ has a bounded approximate
identity. See \cite{eym} and \cite{kal} for more information on $ B(G) $, $ A(G)$ and $VN(G)$.

Let $\mathcal{A}$ be Banach algebra. Then $\mathcal{A}^*$ is a Banach $\mathcal{A}$-bimodule in a natural way. Let $ X $ be a norm closed $\mathcal{A}$-subbimodule of $\mathcal{A}^*$. For $ m\in X^* $ and $ x \in X $, define $ m \cdot x \in \mathcal{A}^* $ by
$$\langle m \cdot x , a\rangle = \langle m , x \cdot a\rangle \quad (a \in \mathcal{A}).$$
If $ m \cdot x \in X $ for all $ m \in X^* $ and $ x\in X $, then $ X $ is called a { (left) introverted } subspace of $\mathcal{A}^*$. The dual of an introverted subspace can be turned into a Banach algebra
if for all $ m,n \in X^* $ we define $ m \cdot n \in X^* $ by $ \langle m \cdot n , x\rangle = \langle m , n\cdot x \rangle$. In particular, we have the first Arens product on $ \mathcal{A}^{**} $ by taking $ X= \mathcal{A}^{*}$; see \cite{dal-lau} for more details. If $X$ is faithful (that is, $a = 0$ whenever $\langle x , a \rangle = 0$ for all $ x \in X $), then the natural map of $ \mathcal{A} $ into $ X^*$ is injective. The space $X^*$ can be equipped with the weak$^*$-topology
$\sigma(X^*,X)$. It is well known that for each $ m \in X^* $, the map $ m\mapsto m \cdot n, X^* \longrightarrow X^* $, is weak$^*$ continuous.
In addition to ${\mathcal A}^*$, another example of introverted subspace of ${\mathcal A}^*$ is the closed linear span of ${\mathcal A}^*\cdot {\mathcal A}$ in ${\mathcal A}^*$ which is denoted by $LUC({\mathcal A})$. If $G$ is a locally compact group,
then $LUC(A(G))$ coincides with $UCB(\widehat{G})$, the space of uniformly continuous functionals on $A(G)$; see \cite{granier}.

Note that even if ${\mathcal A}$ is commutative, $X^*$ may not be commutative
in general. For example,  $ L^\infty(G)^*$
for $G$ abelian and infinite \cite{civin} and $VN(G)^*$
for $G$ amenable and infinite \cite{lauff} are not commutative.

A linear map $T:{\mathcal A}\rightarrow {\mathcal A}$ is called a right (resp. left) multiplier of ${\mathcal A}$ if it satisfies $$ T(ab)= aT(b) \quad({\rm resp.}~T(ab) = T(a)b)\quad(a,b\in {\mathcal A}). $$
For each $ a \in \mathcal{A} $, let $ R_a: \mathcal{A}\rightarrow \mathcal{A} $ (resp. $ L_a: \mathcal{A}\rightarrow \mathcal{A}$)	be the multiplication map defined by $$R_a (b)= ba\quad({\rm resp.}~ L_a(b)= ab)\quad (b\in\mathcal{A}). $$ Then it is easy to see that $R_a$ (resp. $L_a$) is a right (resp. left) multiplier of ${\mathcal A}$.

\section{Remark and Examples}\label{s2}
Throughout this section, ${X}$ will denote an introverted subspace of $VN(G)$ satisfying (\ref{m}).
In the sequel we give some introverted subspaces of $VN(G)$ satisfying these conditions.	

\begin{example}\label{ex}
	{\rm	Let $G$ be a locally compact group.
		\begin{itemize}
			\item[(i)] It is known that $X=C_\lambda^*(G)$ is an introverted subspace of $VN(G)$ satisfying (\ref{m}); see \cite[Proposition 5.2]{lauf}.
			
			\item[(ii)] If $G$ is  discrete, then by \cite[Proposition 4.5]{lauf}, we have $UCB(\widehat{G})=C_\lambda^*(G)$ and so $X=VN(G)$ satisfying (\ref{m}).
		\end{itemize}
	}
\end{example}

Let $ \iota : C_{\lambda}^{*}(G) \longrightarrow {X} $ be the natural embedding. Then it is easy to see that $\pi:=\iota^* : {X}^* \rightarrow C_{\lambda}^{*}(G)^{*}$ is the natural restriction map. A simple computation shows
that $\pi$ is the identity on $ A(G) $ and it is a weak$^*$-weak$^*$ continuous algebra homomorphism when $ C_{\lambda}^{*}(G)^*$ is equipped with the induced
Arens product. Moreover, we recall that the Arens product on $ C_{\lambda}^{*}(G)^*= B_\lambda(G)$ is precisely the pointwise multiplication; see \cite[Proposition 5.3]{lauf}.

%%%%%%%%%%%%%%%%

Before giving the following results, note that $X\cdot A(G)\subseteq C^*_\lambda(G)$. Hence, for every $n\in X^*$ and $x\in X$, we may define
the functional $\pi(n)\bullet x\in VN(G)$ as follows
$$
\langle \pi(n)\bullet x, \varphi\rangle=\langle \pi(n), x\cdot\varphi\rangle\quad(\varphi\in A(G)).
$$
It follows that $\pi(n)\bullet x=n\cdot x\in X$.
Thus, for every $m, n\in X^*$ we can define the functional
$m\bullet\pi(n)\in X^*$ as follows
$$
\langle m\bullet\pi(n), x\rangle=\langle m, \pi(n)\bullet x\rangle\quad(x\in X).
$$

\begin{lemma}\label{lem0}
	Let the map $ \pi : {X}^* \rightarrow B_{\lambda}(G)$ be the natural restriction map. Then the following statements hold.
	\begin{itemize}
		\item[{\rm(i)}] $ m \cdot n = m\bullet\pi(n) $ for all $ m, n \in {X}^* $.
		
		\item[{\rm(ii)}] $ \ker \pi = {\rm rann}( {X}^* )$.
		
		\item[{\rm(iii)}] $\mathrm{rad}(X^{*})={\rm rann}(X^{*})$.
	\end{itemize}
\end{lemma}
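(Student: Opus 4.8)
The plan is to establish the three parts in order, using the setup built up just before the statement. For part (i), I would unravel the definitions of the Arens product on $X^*$ and of the maps $\pi(n)\bullet x$ and $m\bullet\pi(n)$. Fix $m,n\in X^*$ and $x\in X$. By definition $\langle m\cdot n, x\rangle = \langle m, n\cdot x\rangle$, and the paragraph preceding the lemma shows $n\cdot x = \pi(n)\bullet x$ (this is exactly the identity $\pi(n)\bullet x = n\cdot x\in X$ recorded there). Hence $\langle m\cdot n, x\rangle = \langle m, \pi(n)\bullet x\rangle = \langle m\bullet\pi(n), x\rangle$, where the last equality is the definition of $m\bullet\pi(n)$. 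Since this holds for all $x\in X$, we get $m\cdot n = m\bullet\pi(n)$. This step is essentially bookkeeping once the preparatory constructions are in hand.

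For part (ii), I would argue both inclusions. If $\pi(n)=0$, then for every $x\in X$ and $\varphi\in A(G)$ we have $\langle\pi(n), x\cdot\varphi\rangle = 0$, so $\pi(n)\bullet x = 0$ in $VN(G)$, hence $n\cdot x = 0$ for all $x\in X$, i.e.\ $n = 0$ as a functional applied after the first Arens factor — more precisely, by part (i), $m\cdot n = m\bullet\pi(n) = 0$ for all $m\in X^*$, so $n\in{\rm rann}(X^*)$. Conversely, suppose $n\in{\rm rann}(X^*)$, so $m\cdot n = 0$ for all $m\in X^*$; in particular, restricting to $m$ in the canonical image of $A(G)$ in $X^*$ and using that $\pi$ is the identity on $A(G)$ together with $\langle m\cdot n, x\rangle = \langle m, n\cdot x\rangle$, one deduces that $n\cdot x$ pairs trivially against all of $A(G)\subseteq X$, forcing $\pi(n)=0$ because $A(G)$ is faithful on $VN(G)$ (equivalently, $\pi(n)\in B_\lambda(G)$ is determined by its values on $A(G)$, which are $\langle\pi(n), \varphi\rangle$ for $\varphi\in A(G)$, and these all vanish). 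I expect the subtle point here is making the identification "$m\cdot n = 0$ for all $m$" imply "$\pi(n)=0$" cleanly; the cleanest route is: for $\varphi\in A(G)\subseteq X$ regarded as an element $\hat\varphi\in X^*$ via the canonical embedding of $A(G)\hookrightarrow VN(G)^* = A(G)^{**}\supseteq$..., one computes $\langle\pi(n),\varphi\rangle = \langle \hat\varphi\cdot n, \text{(unit-like element)}\rangle$ or, more robustly, one uses that $\pi$ is an algebra homomorphism with $\pi|_{A(G)}=\mathrm{id}$, so $\pi(n) = \pi(\hat\varphi\cdot n)$-type manipulations collapse — I would pick whichever of these the ambient references make immediate.

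For part (iii), the inclusion ${\rm rann}(X^*)\subseteq\mathrm{rad}(X^*)$ is automatic (the right annihilator is a nil, indeed square-zero, ideal, as noted in the introduction via \cite[Proposition 1.5.6]{dales}). For the reverse inclusion, I would use part (ii): $\mathrm{rad}(X^*)$ annihilates every simple module, and since $\pi:X^*\to B_\lambda(G)$ is a homomorphism onto the semisimple commutative algebra $B_\lambda(G)$ (semisimplicity of $B_\lambda(G)=C^*_\lambda(G)^*$ being standard Fourier-algebra theory), we have $\pi(\mathrm{rad}(X^*))\subseteq\mathrm{rad}(B_\lambda(G)) = \{0\}$, hence $\mathrm{rad}(X^*)\subseteq\ker\pi = {\rm rann}(X^*)$ by part (ii). The main obstacle in the whole lemma is really part (ii)'s second inclusion — pinning down the precise pairing that recovers $\pi(n)$ from the products $m\cdot n$ — while part (iii) then follows formally, modulo invoking semisimplicity of $B_\lambda(G)$ and the general fact $\mathrm{rad}(B)\supseteq\pi(\mathrm{rad}(A))$ for a surjective homomorphism $\pi:A\to B$, which is standard.
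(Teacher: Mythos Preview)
Parts (i), (iii), and the inclusion $\ker\pi\subseteq{\rm rann}(X^*)$ in (ii) are correct and match the paper's approach; indeed your use of the pre-established identity $n\cdot x=\pi(n)\bullet x$ for (i), and of (i) itself for the inclusion $\ker\pi\subseteq{\rm rann}(X^*)$, is slightly more streamlined than the paper's direct weak$^*$-density computations.

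The gap is exactly where you flag it: the inclusion ${\rm rann}(X^*)\subseteq\ker\pi$. From $m\cdot n=0$ for all $m\in X^*$ one gets $n\cdot x=0$ for every $x\in X$ (no need to restrict $m$ to $A(G)$: all $m$ already forces $\langle m,n\cdot x\rangle=0$, hence $n\cdot x=0$). But your passage from there to $\pi(n)=0$ is confused: you write ``$A(G)\subseteq X$'' (it is not; $X\subseteq VN(G)$ while $A(G)\hookrightarrow X^*$) and speak of $\pi(n)\in B_\lambda(G)=C_\lambda^*(G)^*$ being ``determined by its values on $A(G)$'', which does not parse since $A(G)$ does not sit inside $C_\lambda^*(G)$. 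What the paper actually does, and what is needed, is: for $x\in C_\lambda^*(G)\subseteq X$ and $\varphi\in A(G)$ one has $\langle\pi(n),x\cdot\varphi\rangle=\langle n\cdot x,\varphi\rangle=0$, and then the \emph{density of $C_\lambda^*(G)\cdot A(G)$ in $C_\lambda^*(G)$} (\cite[Proposition~4.4]{lauf}) gives $\pi(n)=0$. That density fact is the missing ingredient, and none of your suggested alternative routes substitutes for it.
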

\begin{proof}
(i). Let $ m, n \in {X}^* $. Then there is a net $ (\varphi_\alpha) $ in $A(G) $ such that $\varphi_\alpha \rightarrow m $ in the weak$^*$-topology of $ {X}^* $. For each $ x \in {X}$, since $ x \cdot \varphi_\alpha \in C_{\lambda}^{*}(G) $ we have
\begin{eqnarray*}
	\langle m \cdot n , x \rangle &=& \lim_\alpha \langle \varphi_\alpha \cdot n , x \rangle = \lim_\alpha \langle n , x \cdot \varphi_\alpha \rangle\\
	&=&\lim_\alpha \langle \pi ( n) , x \cdot \varphi_\alpha \rangle
	=\lim_\alpha \langle \pi ( n) \bullet x , \varphi_\alpha \rangle\\
	&=& \langle m , \pi ( n) \bullet x \rangle=\langle m \bullet \pi ( n) , x \rangle.
\end{eqnarray*}
This shows that $ m \cdot n = m \bullet \pi(n)$.

(ii). Let $ n \in \ker \pi $ and $ m \in {X}^* $. Then there is a bounded net $ (\varphi_\alpha)$ in $ A(G) $ such that $\varphi_\alpha \rightarrow m $
in the weak$ ^*$-topology of $ {X}^*$. Thus, for each $ x \in {X} $ we have
$$\langle m \cdot n , x \rangle =\lim_\alpha \langle \pi (n) , x \cdot \varphi_\alpha \rangle = 0, $$
since $ x \cdot \varphi_\alpha \in {X} \cdot A(G) \subseteq C_{\lambda}^{*}(G)$ for all $ \alpha $. Therefore, $ \ker \pi \subseteq {\rm rann}( {X}^* ) $.
To prove the reverse inclusion, let $ r \in {\rm rann}( {X}^* ) $. Then $ m \cdot r =0 $ for all $ m \in {X}^* $. It follows that
$$\langle m , r \cdot x\rangle =\langle m \cdot r , x\rangle = 0 \qquad ( m \in {X}^*, x \in {X} ).$$
Therefore, $ r \cdot x =0 $ for all $x \in {X} $. Thus, for each $ \varphi \in A(G)$ and $ x \in C_{\lambda}^{*}(G)$ we have
$$\langle \pi(r) , x \cdot \varphi\rangle = \langle r \cdot x , \varphi \rangle=0.$$
Since $ C_{\lambda}^{*}(G)\cdot A(G)$ is dense in $ C_{\lambda}^{*}(G)$; see \cite[Proposition 4.4]{lauf}, we conclude that $ \pi(r) =0 $.

(iii). Since $\pi$ is an epimorphism from $X^{*}$ onto semisimple Banach algebra $B_\lambda(G)$, it follows from \cite[Chapter 25, Proposition 10]{bon} that
\begin{equation}
	\mathrm{rad}(X^{*})\subseteq \ker\pi={\rm rann}(X^{*}). \label{r1r}
\end{equation}
Moreover, ${\rm rann}(X^{*})$ is nil and must be included in $\mathrm{rad}(X^{*})$; see \cite[Proposition 1.5.6]{dales}. Hence, $\mathrm{rad}(X^{*})={\rm rann}(X^{*})$.
\end{proof}

As an immediate consequence of previous lemma we have the following result.
\begin{corollary}
	The quotient Banach algebra $X^*/\mathrm{rad}(X^{*})$ is isometrically isomorphic
	to $B_\lambda(G)$.
\end{corollary}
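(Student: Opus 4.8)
The plan is to deduce the corollary directly from Lemma~\ref{lem0} together with the standard first isomorphism theorem for Banach algebras. First I would recall that $\pi : X^* \to B_\lambda(G)$ is a surjective, norm-decreasing algebra homomorphism (surjectivity and the homomorphism property were noted before Lemma~\ref{lem0}, and $\pi$ is the restriction map so $\|\pi(m)\| \le \|m\|$). By part (ii) of Lemma~\ref{lem0}, $\ker\pi = {\rm rann}(X^*)$, and by part (iii), ${\rm rann}(X^*) = \mathrm{rad}(X^*)$; hence $\ker\pi = \mathrm{rad}(X^*)$. Therefore $\pi$ factors through the quotient to give an injective algebra homomorphism $\bar\pi : X^*/\mathrm{rad}(X^*) \to B_\lambda(G)$ which is still surjective, i.e. an algebra isomorphism.

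The remaining point is that $\bar\pi$ is an isometry. For the norm-decreasing direction, $\|\bar\pi(m + \mathrm{rad}(X^*))\| = \|\pi(m)\| \le \|m + r\|$ for every $r \in \ker\pi$, whence $\|\bar\pi(m + \mathrm{rad}(X^*))\| \le \|m + \mathrm{rad}(X^*)\|$. For the reverse inequality I would use the fact, recorded in the excerpt, that $\pi$ restricts to the identity on $A(G)$ and that $A(G)$ is weak$^*$-dense in $X^*$; more precisely, for $m \in X^*$ choose a bounded net $(\varphi_\alpha)$ in $A(G)$ with $\varphi_\alpha \to m$ weak$^*$ and $\liminf_\alpha \|\varphi_\alpha\| \le \|m\|$ (such a net is available by Goldstine-type density together with the identification $A(G) = C_\lambda^*(G)^{**}$ restricted appropriately). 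Since $\pi(\varphi_\alpha) = \varphi_\alpha$ and $\pi$ is weak$^*$-weak$^*$ continuous, $\pi(m)$ is the weak$^*$-limit of $(\varphi_\alpha)$ in $B_\lambda(G) = C_\lambda^*(G)^*$, so by weak$^*$-lower semicontinuity of the norm $\|\pi(m)\| \le \liminf_\alpha \|\varphi_\alpha\|$. On the other hand, given $\varepsilon > 0$ one can pick a coset representative $m' \in m + \mathrm{rad}(X^*)$ with $\|m'\| \le \|m + \mathrm{rad}(X^*)\| + \varepsilon$ and run the same argument; combining the two directions gives $\|\pi(m')\| = \|m' + \mathrm{rad}(X^*)\|$, i.e. $\bar\pi$ is isometric.

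I expect the only genuinely delicate step to be the norm-increasing half of the isometry claim, namely extracting a weak$^*$-approximating net in $A(G)$ whose norms are controlled by $\|m + \mathrm{rad}(X^*)\|$ rather than merely by $\|m\|$. This is where the interplay between the quotient norm on $X^*/\mathrm{rad}(X^*)$ and the predual norm on $B_\lambda(G) = C_\lambda^*(G)^*$ must be used carefully: the cleanest route is to observe that $\bar\pi$ is a bijective, norm-decreasing homomorphism between Banach algebras whose inverse is also norm-decreasing because $B_\lambda(G)$, being the dual of $C_\lambda^*(G)$, and $X^*/\mathrm{rad}(X^*)$ carry compatible weak$^*$ structures under which $\bar\pi$ is a weak$^*$-homeomorphism identifying the two preduals ($A(G)$-level identity of $\pi$). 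Everything else is a routine application of Lemma~\ref{lem0} and the first isomorphism theorem.
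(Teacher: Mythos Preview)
Your reduction of the algebraic isomorphism to Lemma~\ref{lem0}(ii)--(iii) plus the first isomorphism theorem is exactly the paper's (implicit) argument: $\pi$ is a surjective homomorphism with $\ker\pi=\mathrm{rad}(X^*)$, so $\bar\pi:X^*/\mathrm{rad}(X^*)\to B_\lambda(G)$ is an algebra isomorphism, and contractivity of $\bar\pi$ follows from $\pi$ being a restriction map.

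The isometry step, however, contains a genuine gap. Your approximation argument runs in the wrong direction: from $\varphi_\alpha\to m$ weak$^*$ with $\liminf_\alpha\|\varphi_\alpha\|\le\|m\|$ and weak$^*$--weak$^*$ continuity of $\pi$ you obtain $\|\pi(m)\|\le\liminf_\alpha\|\varphi_\alpha\|\le\|m\|$, which is again the contractive inequality you already had, not the reverse bound $\|m+\mathrm{rad}(X^*)\|\le\|\pi(m)\|$ that you need. Picking a near-optimal coset representative $m'$ and ``running the same argument'' still only yields $\|\pi(m')\|\le\|m'\|$; nothing in your outline produces an upper bound on the quotient norm in terms of $\|\pi(m)\|$. (Also, the parenthetical ``$A(G)=C_\lambda^*(G)^{**}$'' is false: $B_\lambda(G)=C_\lambda^*(G)^*$, and $A(G)$ is a closed ideal of $B_\lambda(G)$, not a bidual.)

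The clean fix---and presumably what the paper has in mind when it calls the corollary ``immediate''---is to remember that $\pi=\iota^*$ where $\iota:C_\lambda^*(G)\hookrightarrow X$ is an isometric embedding. By Hahn--Banach, every $\mu\in B_\lambda(G)=C_\lambda^*(G)^*$ extends to some $m\in X^*$ with $\|m\|=\|\mu\|$; then $\pi(m)=\mu$ and
\[
\|m+\ker\pi\|\le\|m\|=\|\mu\|=\|\pi(m)\|=\|\bar\pi(m+\ker\pi)\|.
\]
Together with the contractive direction this gives $\bar\pi$ isometric. No weak$^*$ approximation or Goldstine-type control is needed.
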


We recall that, since $C_\lambda^*(G) \subseteq {X}$, then $X$ is faithful and the natural map of $A(G)$ into $X^*$ is an embedding, and we will regard
$A(G)$ as a subalgebra of $X^*$. In fact, for each $\varphi \in A(G)$, we have
\begin{eqnarray*}
	\Vert \varphi \Vert_{A(G)}\geq\Vert \varphi\Vert_{ {X}^*}&=&
	\sup\{\vert \langle \varphi , x\rangle\vert: x\in X, \Vert x\Vert\leq 1\}\\
	&\geq&
	\sup\{\vert \langle \varphi , x\rangle\vert: x\in C^*_\lambda(G), \Vert x\Vert\leq 1\}\\
	&=& \Vert \varphi \Vert_{B_\lambda(G)}= \Vert \varphi \Vert_{A(G)};
\end{eqnarray*}
that is $\|\varphi\|_{A(G)}=\|\varphi\|_{X^*}$.

\begin{corollary}
	$A(G)$ is an ideal in $ {X}^*$.
\end{corollary}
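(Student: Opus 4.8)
The plan is to show that $A(G)$ is an ideal in $X^*$ by verifying that $m \cdot \varphi \in A(G)$ and $\varphi \cdot m \in A(G)$ for every $m \in X^*$ and $\varphi \in A(G)$, using the structural facts established in Lemma \ref{lem0} and the preceding discussion. Since $A(G)$ is already a subalgebra of $X^*$ (as noted before the statement, via the faithful embedding with $\|\varphi\|_{A(G)} = \|\varphi\|_{X^*}$), the task reduces to closure under left and right multiplication by arbitrary elements of $X^*$.

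First I would handle one-sided multiplication using the restriction map $\pi \colon X^* \to B_\lambda(G)$. By Lemma \ref{lem0}(i), $m \cdot n = m \bullet \pi(n)$ for all $m, n \in X^*$; in particular, for $\varphi \in A(G)$ and $m \in X^*$, one direction of the product $\varphi \cdot m = \varphi \bullet \pi(m)$ should be computed directly. The key point is that $A(G)$ is an ideal in $B_\lambda(G)$ (indeed $A(G)$ is a closed ideal in $B(G) \supseteq B_\lambda(G)$, as recalled in the introduction), and $\pi$ restricts to the identity on $A(G)$; combining this with the explicit formula for $\bullet$ and the hypothesis $X \cdot A(G) \subseteq C^*_\lambda(G)$ should identify $\varphi \cdot m$ with the pointwise product $\varphi \cdot \pi(m)$ in $B_\lambda(G)$, which lies in $A(G)$. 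For the other side, $m \cdot \varphi = m \bullet \pi(\varphi) = m \bullet \varphi$, and I would test this against $x \in X$: since $x \cdot \varphi \in X \cdot A(G) \subseteq C^*_\lambda(G)$, the value $\langle m \cdot \varphi, x\rangle = \langle m, \varphi \bullet x\rangle = \langle m, \varphi \cdot x\rangle$ depends on $m$ only through $\pi(m)$, so $m \cdot \varphi = \pi(m) \cdot \varphi$ in $B_\lambda(G)$, again an element of $A(G)$ because $A(G)$ is an ideal there.

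The main obstacle I anticipate is the bookkeeping of identifying the various products $\bullet$ and $\cdot$ on $X^*$, $B_\lambda(G) = C^*_\lambda(G)^*$, and $VN(G) = A(G)^*$ correctly, and making sure that when I pass from $X^*$ to $B_\lambda(G)$ via $\pi$ the algebraic operations are genuinely intertwined — this is exactly the content of Lemma \ref{lem0}(i) together with the remarks preceding it, so the care lies in invoking these at the right moments rather than in any deep new idea. A clean way to organize the argument is: for $\varphi \in A(G)$ and $m \in X^*$, show $\langle \varphi \cdot m, x\rangle = \langle \pi(\varphi \cdot m), x\rangle$ makes sense because $\varphi \cdot m \in C^*_\lambda(G) \subseteq X$ already (using $X \cdot A(G) \subseteq C^*_\lambda(G)$ and the module action), and then $\pi(\varphi \cdot m) = \pi(\varphi)\pi(m) = \varphi \pi(m) \in A(G)$ since $\pi$ is a homomorphism and $A(G)$ is an ideal in $B_\lambda(G)$; the symmetric computation handles $m \cdot \varphi$. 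Finally I would conclude that both products stay in $A(G)$, which together with the subalgebra property gives that $A(G)$ is a (two-sided) ideal in $X^*$.
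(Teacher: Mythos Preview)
Your approach is correct and essentially the same as the paper's: use Lemma~\ref{lem0}(i) to rewrite both $m\cdot\varphi$ and $\varphi\cdot m$ in terms of $\pi(m)\in B_\lambda(G)$, then invoke the fact that $A(G)$ is an ideal in $B_\lambda(G)$. The paper compresses all of this into one line, writing $m\cdot\varphi=\varphi\cdot m=\varphi\bullet\pi(m)=\varphi\,\pi(m)\in A(G)$, whereas you spell out the two sides separately; that is the only difference.

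One small slip to fix: in your last paragraph you write that ``$\varphi\cdot m\in C^*_\lambda(G)\subseteq X$ already.'' This is a category error --- $\varphi\cdot m$ lives in $X^*$, not in $X$. What you presumably mean (and what makes the argument work) is that $\pi(\varphi\cdot m)=\pi(\varphi)\pi(m)=\varphi\,\pi(m)$ lies in $A(G)\subseteq B_\lambda(G)$, and that $\varphi\cdot m$ is determined by its image under $\pi$ because, testing against $x\in X$, one only ever sees $x\cdot\varphi\in C^*_\lambda(G)$. Once that is stated correctly the argument goes through exactly as you outlined.
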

\begin{proof}
Suppose that $\varphi \in A(G)$ and $m \in {X}^*$. Then
$m\cdot \varphi=\varphi\cdot m=\varphi\bullet\pi(m)=\varphi \pi(m)\in B_\lambda(G)$, since $A(G)$ is an ideal in $B_\lambda(G)$ and
$\pi(m)\in B_\lambda(G)$. It follows that $A(G)$ is an ideal in $ {X}^*$.
\end{proof}

If $G$ is an amenable locally compact group, then $A(G)$ has a bounded approximate identity $(\varphi_\alpha)$ such that $\|\varphi_\alpha\|=1$. It is easy to see that any weak$^*$-cluster point $E$
of $(\varphi_\alpha)$ is a right identity for $X^{*}$ with $\|E\|=1$.
\begin{corollary}\label{cor5}
	The following statements hold.
	\begin{itemize}
		\item[{\rm (i)}] $X^{*}$ has a right identity if and only if $G$ is amenable.
		
		\item[{\rm (ii)}] $X^{*}$ has a left identity if and only if $X^{*}=B(G)$.
	\end{itemize}
\end{corollary}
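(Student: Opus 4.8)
The plan is to run every implication through the restriction epimorphism $\pi\colon X^*\to B_\lambda(G)$ of Lemma~\ref{lem0}, exploiting that $B_\lambda(G)$ carries the pointwise product (so is commutative) and the elementary fact that $B_\lambda(G)$ is unital exactly when $1_G\in B_\lambda(G)$, which by \cite[Theorem 1.4.1]{jadid2} is equivalent to amenability of $G$.

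For (i), the ``if'' part is precisely the observation recorded just before the statement: a weak$^*$-cluster point $E$ of a normalised bounded approximate identity of $A(G)$ is a right identity for $X^*$. For the converse, let $E$ be a right identity of $X^*$. Since $\pi$ is a surjective algebra homomorphism, every $u=\pi(m)\in B_\lambda(G)$ satisfies $u\,\pi(E)=\pi(m)\pi(E)=\pi(m\cdot E)=\pi(m)=u$, so $\pi(E)$ is a right identity of the commutative algebra $B_\lambda(G)$, hence its identity. Restricting the relation $\pi(E)\varphi=\varphi$ to $\varphi\in A(G)\subseteq B_\lambda(G)$ and using that the functions in $A(G)$ have no common zero in $G$ forces $\pi(E)=1_G$; thus $1_G\in B_\lambda(G)$ and $G$ is amenable.

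For (ii), if $X^*=B(G)$ then $X^*$ is unital, the constant function $1_G$ being the identity of $B(G)$, and in particular it has a left identity. For the converse, suppose $E$ is a left identity of $X^*$. By Lemma~\ref{lem0}(i), $m=E\cdot m=E\bullet\pi(m)$ for every $m\in X^*$, so $m$ is determined by $\pi(m)$; hence $\ker\pi=0$, and then $\mathrm{rad}(X^*)={\rm rann}(X^*)=\ker\pi=0$ by Lemma~\ref{lem0}, so $\pi$ is an (isometric) algebra isomorphism of $X^*$ onto $B_\lambda(G)$ (recall $X^*/\mathrm{rad}(X^*)\cong B_\lambda(G)$). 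Exactly as in (i), $\pi(E)$ is then the identity of $B_\lambda(G)$, whence $1_G\in B_\lambda(G)$ and $B_\lambda(G)=B(G)$; therefore $X^*\cong B(G)$, i.e. $X^*=B(G)$.

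I do not expect a genuine obstacle here: the whole argument is the combination of Lemma~\ref{lem0} with the amenability characterisation $B_\lambda(G)=B(G)$. The only points to watch are the elementary bookkeeping (acting on the correct side — a right identity in (i), a left identity in (ii) — and using commutativity of $B_\lambda(G)$ to upgrade a one-sided identity there to a two-sided one) and the harmless standard input that $A(G)$ has no common zero, which pins the idempotent identity of $B_\lambda(G)$ down to $1_G$.
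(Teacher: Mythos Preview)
Your proof is correct and follows essentially the same route as the paper: push everything through the epimorphism $\pi$ onto $B_\lambda(G)$, use that a one-sided identity of $X^*$ yields an identity of the commutative algebra $B_\lambda(G)$, and invoke the amenability criterion $B_\lambda(G)=B(G)$. The only cosmetic differences are that in (ii) you deduce $\ker\pi=0$ from the formula $m=E\bullet\pi(m)$ of Lemma~\ref{lem0}(i), whereas the paper cites Lemma~\ref{lem0}(ii) ($\ker\pi={\rm rann}(X^*)$) directly, and that you spell out the ``no common zero'' step pinning down the identity of $B_\lambda(G)$ as $1_G$, which the paper leaves implicit.
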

\begin{proof}
(i). Suppose that $G$ is amenable. Then $A(G)$ has a bounded approximate identity, say $(\varphi_\alpha)$. It is easy to see that any weak$^*$-cluster point $E$
of $(\varphi_\alpha)$ is a right identity of the Banach algebra $X^{*}$. For the converse, suppose that $ E $ is a right identity of $ {X}^*$. Then it is easy to see that $\pi(E)$ is the identity element of $B_\lambda(G)$. Therefore, $G$ is amenable.

(ii). Suppose that $ E $ is a left identity of $ {X}^*$. Then Lemma \ref{lem0}(ii) implies that $\ker\pi=\{0\}$. Thus, $\pi$ is an algebra isomorphism, that is, $X^{*}=B_\lambda(G)$. Now, let $ \mu \in B_\lambda(G) $ and $ m \in {X}^* $ be any Hahn-Banach extension of $ \mu $. Then it follows that
$$\mu \pi(E)=\pi(E)\mu = \pi(E)\cdot \mu = \pi(E)\cdot\pi (m) = \pi(E\cdot m) = \pi(m) = \mu.$$
Hence, $B_\lambda(G)$ is unital and so
$X^{*}=B_\lambda(G)=B(G)$. The converse, is trivial.
\end{proof}

An algebra $\mathcal{A}$ is called semiprime if for any $a\in{\mathcal A}$, $a{\mathcal A}a$ implies $a=0$. It is known
that every semisimple algebra is semiprime.
\begin{lemma}\label{lemma1}
	Let $G$ be amenable. Then the following statements are equivalent.
	\begin{itemize}
		\item[\rm (i)] $X^{*}$ is semisimple.
		
		\item[{\rm (ii)}] $X^{*}$ is semiprime.
		
		\item[{\rm (iii)}] $X=C_{\lambda}^{*}(G)$
		
		\item[{\rm (iv)}] $X^{*}$ is commutative.
		
		\item[{\rm (v)}] $X^{*}$ is unital.
	\end{itemize}
\end{lemma}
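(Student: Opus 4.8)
The natural approach is to prove the cyclic chain of implications
$\mathrm{(i)}\Rightarrow\mathrm{(ii)}\Rightarrow\mathrm{(iii)}\Rightarrow\mathrm{(iv)}\Rightarrow\mathrm{(v)}\Rightarrow\mathrm{(i)}$, using as the main ingredients: Lemma~\ref{lem0}, which gives $\mathrm{rad}(X^*)={\rm rann}(X^*)=\ker\pi$; the fact recalled in Section~\ref{s2} that on $C^*_\lambda(G)^*=B_\lambda(G)$ the induced Arens product is just pointwise multiplication; and Corollary~\ref{cor5}, which (since $G$ is amenable) supplies a right identity for $X^*$ and characterizes when $X^*$ has a left identity.

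The implication $\mathrm{(i)}\Rightarrow\mathrm{(ii)}$ is the general fact, recalled immediately before the statement, that a semisimple algebra is semiprime. For $\mathrm{(iii)}\Rightarrow\mathrm{(iv)}$: if $X=C^*_\lambda(G)$ then $X^*=B_\lambda(G)$, whose product is pointwise multiplication and hence commutative. For $\mathrm{(iv)}\Rightarrow\mathrm{(v)}$: amenability of $G$ gives, by Corollary~\ref{cor5}(i), a right identity $E\in X^*$; if $X^*$ is commutative then $E$ is automatically a two-sided identity, so $X^*$ is unital. For $\mathrm{(v)}\Rightarrow\mathrm{(i)}$: a unital $X^*$ has a left identity $E$, so for every $r\in{\rm rann}(X^*)$ we get $r=Er=0$ (the first equality because $E$ is a left identity, the second because $Er=0$ for $r\in{\rm rann}(X^*)$); thus $\mathrm{rad}(X^*)={\rm rann}(X^*)=\{0\}$ and $X^*$ is semisimple. (Alternatively, $\mathrm{(v)}$ forces $X^*=B(G)$ via Corollary~\ref{cor5}(ii), which is semisimple.)

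The one step that needs a little care is $\mathrm{(ii)}\Rightarrow\mathrm{(iii)}$. Suppose $X^*$ is semiprime and let $r\in{\rm rann}(X^*)=\ker\pi$ (Lemma~\ref{lem0}(ii)). For every $m\in X^*$ we have $rmr=(rm)r=0$, since $r$ is right-annihilated by the element $rm\in X^*$; hence $rX^*r=\{0\}$ and semiprimeness yields $r=0$. Therefore $\ker\pi=\{0\}$. Now $\pi=\iota^*$ is the adjoint of the inclusion $\iota:C^*_\lambda(G)\hookrightarrow X$, so $\ker\pi$ is exactly the annihilator of $C^*_\lambda(G)$ in $X^*$; its being $\{0\}$ means, by the Hahn--Banach theorem, that $C^*_\lambda(G)$ is norm-dense in $X$, and since $C^*_\lambda(G)$ is already norm-closed we conclude $X=C^*_\lambda(G)$.

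The main obstacle is precisely this passage from the purely algebraic hypothesis (semiprimeness) to the concrete conclusion $X=C^*_\lambda(G)$: it rests on the observation that ${\rm rann}(X^*)$ is a square-zero set and on the duality identification of $\ker\pi$ with the annihilator of $C^*_\lambda(G)$ in $X^*$. Once Lemma~\ref{lem0} and Corollary~\ref{cor5} are in hand, all the remaining links are routine.
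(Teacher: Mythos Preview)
Your proposal is correct and follows essentially the same cyclic chain $\mathrm{(i)}\Rightarrow\mathrm{(ii)}\Rightarrow\mathrm{(iii)}\Rightarrow\mathrm{(iv)}\Rightarrow\mathrm{(v)}\Rightarrow\mathrm{(i)}$ as the paper; in particular, your treatment of the key step $\mathrm{(ii)}\Rightarrow\mathrm{(iii)}$ via $r X^* r=\{0\}$ for $r\in{\rm rann}(X^*)$ and Hahn--Banach is exactly what the paper has in mind when it says ``it is easy to see that $\ker\pi={\rm rann}(X^*)=\{0\}$''. For $\mathrm{(v)}\Rightarrow\mathrm{(i)}$ the paper invokes Corollary~\ref{cor5}(ii) directly (your stated alternative), but your first argument via a left identity annihilating ${\rm rann}(X^*)$ is equally valid and perhaps more self-contained.
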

\begin{proof} The implications (i)$\Rightarrow$(ii),	(iii)$\Rightarrow$(iv) and (iv)$\Rightarrow$(v) are trivial. The implication (v)$\Rightarrow$(i) follows from Corollary \ref{cor5}(ii). It suffice to prove (ii)$\Rightarrow$(iii). Suppose that $X^{*}$ is semiprime. Then it is easy to see that $\ker\pi={\rm rann}(X^{*})=\{0\}$.
Now, by Hahn-Banach theorem we conclude that $X=C_{\lambda}^{*}(G)$.
\end{proof}

\section{Singer-Wermer Conjecture for Generalized Derivations}
Throughout the rest, ${\mathcal A}$ denotes a Banach algebra with the properties that
$\mathrm{rad}({\mathcal A})={\rm rann}({\mathcal A})$ and the algebra ${\mathcal A}/\mathrm{rad}({\mathcal A})$ is commutative.

\begin{remark}\label{rem1}

	{\rm (i) Let $G$ be an infinite discrete amenable group. Then it follows from \cite[Theorem 3]{gran} that $VN(G)\neq C_\lambda^*(G)=UCB(\widehat{G})$. Since $G$ is discrete, by Example \ref{ex}(ii), $VN(G)$ satisfies the conditions of (\ref{m}) and so $\mathrm{rad}(VN(G)^*)={\rm rann}(VN(G)^*)$ and the algebra ${VN(G)^*}/\mathrm{rad}(VN(G)^*)$ is commutative. Furthermore, the algebra $VN(G)^*$ has a right identity and it is never commutative by Lemma \ref{lemma1}.
	
	(ii) Let $G$ be an abelian  non-discrete locally compact  group. Then  $\mathrm{rad}(L_0^\infty(G)^*)={\rm rann}(L_0^\infty(G)^*)$ and the algebra ${L_0^\infty(G)^*}/\mathrm{rad}(L_0^\infty(G)^*)$ is commutative; see \cite{magh}. Furthermore, the algebra $L_0^\infty(G)^*$ has a right identity and it is never commutative.}
\end{remark}

\begin{theorem}\label{theorem1}
	Let $d$ be a derivation of ${\mathcal A}$. Then $d({\mathcal A})\subseteq \mathrm{rad}({\mathcal A})$.
\end{theorem}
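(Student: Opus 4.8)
The plan is to reduce the statement to the classical Singer–Wermer–Thomas theorem by passing to the quotient ${\mathcal A}/\mathrm{rad}({\mathcal A})$, which is commutative by hypothesis, and then using the special structure $\mathrm{rad}({\mathcal A})={\rm rann}({\mathcal A})$ to handle the part of $d$ that lands inside the radical. First I would recall that for any derivation $d$ of any Banach algebra, $d(\mathrm{rad}({\mathcal A}))\subseteq\mathrm{rad}({\mathcal A})$; this is a standard fact (it follows, e.g., from the characterization of the radical via quasi-invertibility, or from the fact that the radical is the intersection of the primitive ideals together with an argument that $d$ maps each primitive ideal into itself after extending $d$ to the separating-space machinery — but in fact the cleanest route is Thomas's theorem applied to the separating space, or simply citing the known invariance of the radical under derivations). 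Granting this, $d$ induces a well-defined derivation $\bar d$ on the commutative Banach algebra ${\mathcal A}/\mathrm{rad}({\mathcal A})$ by $\bar d(a+\mathrm{rad}({\mathcal A})) = d(a)+\mathrm{rad}({\mathcal A})$.

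Next I would invoke Thomas's theorem (the unbounded Singer–Wermer theorem, \cite{thomas}): every derivation of a commutative Banach algebra maps the algebra into its radical. Since ${\mathcal A}/\mathrm{rad}({\mathcal A})$ is semisimple (the radical of a quotient by the radical is zero), this forces $\bar d = 0$; equivalently, $d({\mathcal A})\subseteq\mathrm{rad}({\mathcal A})$, which is exactly the claim. So in fact, once the invariance of the radical is in hand, the theorem is essentially immediate and does not even need the hypothesis $\mathrm{rad}({\mathcal A})={\rm rann}({\mathcal A})$ — but I expect the authors do use the annihilator condition, so let me reconsider: the subtlety is whether $d$ is assumed bounded. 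The statement says merely "a derivation", not "a bounded derivation," so one cannot a priori assert $d(\mathrm{rad}({\mathcal A}))\subseteq\mathrm{rad}({\mathcal A})$ without care. Here the hypothesis $\mathrm{rad}({\mathcal A})={\rm rann}({\mathcal A})$ becomes useful: for $r\in{\rm rann}({\mathcal A})$ and any $a\in{\mathcal A}$, we have $ar = 0$, so $0 = d(ar) = d(a)r + a\,d(r)$, hence $a\,d(r) = -d(a)r = 0$ for all $a$ (since $r\in{\rm rann}({\mathcal A})$ means $d(a)r=0$). Therefore $d(r)\in{\rm rann}({\mathcal A}) = \mathrm{rad}({\mathcal A})$, giving the invariance $d(\mathrm{rad}({\mathcal A}))\subseteq\mathrm{rad}({\mathcal A})$ directly and elementarily, with no boundedness needed.

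With that, the induced map $\bar d$ on ${\mathcal A}/\mathrm{rad}({\mathcal A})$ is a genuine (possibly unbounded) derivation of a commutative Banach algebra, Thomas's theorem gives $\bar d({\mathcal A}/\mathrm{rad}({\mathcal A}))\subseteq\mathrm{rad}({\mathcal A}/\mathrm{rad}({\mathcal A})) = \{0\}$, and we conclude $d({\mathcal A})\subseteq\mathrm{rad}({\mathcal A})$. The one point requiring a little attention — the main (though modest) obstacle — is the well-definedness of $\bar d$, i.e. the invariance of the radical under $d$; I would present the short computation above using ${\rm rann}({\mathcal A})$ rather than appealing to any boundedness or to the heavier Thomas-type separating-space argument, since the hypotheses of the paper make it a one-line check. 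Everything else is a direct citation of \cite{thomas}.
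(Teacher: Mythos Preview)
Your proposal is correct and follows essentially the same route as the paper: use $\mathrm{rad}({\mathcal A})={\rm rann}({\mathcal A})$ to check directly that $d(\mathrm{rad}({\mathcal A}))\subseteq\mathrm{rad}({\mathcal A})$ via the one-line computation $0=d(ac)=d(a)c+ad(c)=ad(c)$, then pass to the induced derivation on the commutative semisimple quotient and apply Thomas's theorem. Your discussion even pinpoints the exact role of the annihilator hypothesis, matching the paper's argument precisely.
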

\begin{proof}
Let $c\in \mathrm{rad}({\mathcal A})={\rm rann}({\mathcal A})$ and $a\in {\mathcal A}$. Then
$$0=d(ac)=ad(c)+d(a)c=ad(c).$$
This shows that $d(\mathrm{rad}({\mathcal A}))\subseteq \mathrm{rad}({\mathcal A})$.
It is easy to see that the map $\widetilde{d}: {\mathcal A}/\mathrm{rad}({\mathcal A})\rightarrow {\mathcal A}/\mathrm{rad}({\mathcal A})$ defined by $\widetilde{d}(a)=d(a)+\mathrm{rad}({\mathcal A})$ is a derivation. Moreover, by assumption ${\mathcal A}/\mathrm{rad}({\mathcal A})$ is a commutative semisimple Banach algebra. Therefore, by \cite{thomas} we obtain  $\widetilde{d}=0$. This implies that
$d({\mathcal A})\subseteq \mathrm{rad}({\mathcal A})$.\end{proof}

Before giving the following consequence of Theorem \ref{theorem1}, let us recall that a linear mapping $T:{\mathcal A}\rightarrow {\mathcal A}$ is called spectrally bounded if there exists $M\geq 0$ such that
$r(T(a))\leq M r(a)$ for all $a\in {\mathcal A}$, where $r(a)$ denotes the spectral radius of $a\in {\mathcal A}$. In
addition, if $M = 0$, $T$ is called spectrally infinitesimal.
\begin{corollary}\label{theorem2}
	The following statements hold.
	\begin{itemize}
		\item[{\rm (i)}]
		The composition of two derivations of ${\mathcal A}$ is always a derivation of ${\mathcal A}$.
		\item[{\rm (ii)}]
		Every derivation of ${\mathcal A}$ is spectrally infinitesimal.
	\end{itemize}
\end{corollary}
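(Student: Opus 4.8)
The plan is to deduce both statements directly from Theorem \ref{theorem1}, which tells us that every derivation of ${\mathcal A}$ has its image contained in $\mathrm{rad}({\mathcal A}) = {\rm rann}({\mathcal A})$. This single fact is what makes the corollary almost immediate, so the work is really just unwinding the definitions.

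For part (i), let $d$ and $d'$ be derivations of ${\mathcal A}$ and consider $D = d \circ d'$. In general the composition of two derivations fails to be a derivation because of the extra term $2\,d(a)d'(b)$ that appears when one expands $D(ab)$; the point here is that this obstruction term vanishes. First I would expand $D(ab) = d\bigl(d'(a)b + a\,d'(b)\bigr) = d(d'(a))b + d'(a)d(b) + d(a)d'(b) + a\,d(d'(b))$, so that $D(ab) = D(a)b + a\,D(b) + \bigl(d'(a)d(b) + d(a)d'(b)\bigr)$. Now by Theorem \ref{theorem1} we have $d(b), d'(a) \in {\rm rann}({\mathcal A})$, and since ${\rm rann}({\mathcal A})$ kills everything on the left, $d'(a)d(b) = 0$ and $d(a)d'(b) = 0$. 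Hence the bracketed term is $0$ and $D$ is a derivation. (One should note this also shows the composition lands in $\mathrm{rad}({\mathcal A})$, though that is not asked.)

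For part (ii), let $d$ be a derivation of ${\mathcal A}$ and let $a \in {\mathcal A}$. By Theorem \ref{theorem1}, $d(a) \in \mathrm{rad}({\mathcal A})$. Every element of the radical is quasi-nilpotent, i.e. has spectral radius zero, so $r(d(a)) = 0$ for all $a \in {\mathcal A}$. Taking $M = 0$ in the definition of spectrally bounded shows $d$ is spectrally infinitesimal.

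I do not anticipate a genuine obstacle: the only subtlety is making sure the cross terms in part (i) are handled via the \emph{right} annihilator property (elements of $\mathrm{rad}({\mathcal A}) = {\rm rann}({\mathcal A})$ annihilate ${\mathcal A}$ on the right, so they must appear as the \emph{left} factor in a product for the cancellation to apply), and indeed in both problematic products $d'(a)d(b)$ and $d(a)d'(b)$ the radical element sits on the left. Everything else is a routine expansion plus the standard fact that radical elements are quasi-nilpotent.
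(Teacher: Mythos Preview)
Your argument is correct and matches the paper's approach: part (i) is declared there an immediate consequence of Theorem \ref{theorem1}, and for part (ii) the paper notes that $d(a)\in{\rm rann}({\mathcal A})$ gives $(d(a))^i=0$ for $i\ge 2$, hence $r(d(a))=0$.

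One point of confusion in your last paragraph deserves correction, even though it does not damage the proof. By definition, $c\in{\rm rann}({\mathcal A})$ means $xc=0$ for every $x\in{\mathcal A}$, so a right-annihilator element must appear as the \emph{right} factor of a product to force it to vanish, not the left. Thus the correct justification for $d'(a)d(b)=0$ and $d(a)d'(b)=0$ is that the right factors $d(b)$ and $d'(b)$ lie in ${\rm rann}({\mathcal A})$. In this particular situation all four factors are derivation values and hence in ${\rm rann}({\mathcal A})$, which is why your reversed reasoning still lands on the right conclusion; but the sidedness remark as written is backwards.
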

\begin{proof}
(i). This is an immediate consequence of Theorem \ref{theorem1}.

(ii). Let $d$ be a derivation on ${\mathcal A}$. Since $d({\mathcal A})\subseteq \mathrm{rad}({\mathcal A})$ and $\mathrm{rad}({\mathcal A})={\rm rann}({\mathcal A})$, it follows that $(d(a))^{i}=0$ for all $i\geq 2$ and $a\in {\mathcal A}$. Therefore, by spectral radius theorem, $r(d(a))=0$ and hence $d$ is spectrally infinitesimal.
\end{proof}

\begin{theorem}\label{theorem15}
	If ${\mathcal A}$ has a right identity, then every generalized derivation of ${\mathcal A}$ is spectrally bounded.
\end{theorem}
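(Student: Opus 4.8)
The plan is to push the generalized derivation down to the commutative quotient $\mathcal{B} := \mathcal{A}/\mathrm{rad}(\mathcal{A})$, where spectral radii are easy to control, and to exploit the classical fact that the quotient map $\mathcal{A}\to\mathcal{B}$ preserves spectral radii.

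First I would record an auxiliary estimate valid in $\mathcal{A}$ itself, namely $r(xy)\le r(x)\,r(y)$ for all $x,y\in\mathcal{A}$. Indeed, the spectral radius is invariant under quotienting by the radical, $r_{\mathcal{A}}(z)=r_{\mathcal{B}}(z+\mathrm{rad}(\mathcal{A}))$ for every $z\in\mathcal{A}$ (see, e.g., \cite{bon}), and in the commutative Banach algebra $\mathcal{B}$ one has $r(\bar x\,\bar y)\le r(\bar x)\,r(\bar y)$ by Gelfand theory; combining these gives the claim. The same invariance also yields $r(x+c)=r(x)$ whenever $c\in\mathrm{rad}(\mathcal{A})$, since $x$ and $x+c$ have the same image in $\mathcal{B}$.

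Now let $(\delta,d)$ be a generalized derivation of $\mathcal{A}$ and let $e$ be a right identity. Writing $a=ae$ and using the defining relation, $\delta(a)=\delta(ae)=a\,\delta(e)+d(a)\,e=a\,\delta(e)+d(a)$, where the last equality uses that $e$ is a right identity so $d(a)\,e=d(a)$. By Theorem \ref{theorem1}, $d(a)\in\mathrm{rad}(\mathcal{A})$ for every $a\in\mathcal{A}$, so this term does not affect the spectral radius:
$$ r(\delta(a)) = r\big(a\,\delta(e) + d(a)\big) = r\big(a\,\delta(e)\big) \le r(a)\, r(\delta(e)). $$
Setting $M:=r(\delta(e))$, which is finite since $r(\delta(e))\le\|\delta(e)\|$, we conclude that $\delta$ is spectrally bounded.

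I do not expect a genuine obstacle here: the argument rests only on Theorem \ref{theorem1} together with two standard facts about the spectral radius (invariance under $\mathcal{A}\to\mathcal{A}/\mathrm{rad}(\mathcal{A})$ and submultiplicativity in commutative Banach algebras). If a more structural formulation is preferred, one can instead note that $\delta$ induces a well-defined right multiplier $\widetilde\delta$ on $\mathcal{B}$ (the associated derivation vanishes on $\mathcal{B}$ by Theorem \ref{theorem1}), and since $\mathcal{B}$ is commutative with identity $\bar e$ this forces $\widetilde\delta$ to be multiplication by $\widetilde\delta(\bar e)=\overline{\delta(e)}$; the estimate $r(\delta(a))=r\big(\bar a\,\overline{\delta(e)}\big)\le r(a)\,r(\delta(e))$ then drops out exactly as above.
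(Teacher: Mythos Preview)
Your proof is correct and follows essentially the same approach as the paper: write $\delta(a)=a\,\delta(e)+d(a)$ using the right identity, eliminate the $d(a)$ term via Theorem~\ref{theorem1}, and invoke submultiplicativity of the spectral radius (which holds because $\mathcal{A}/\mathrm{rad}(\mathcal{A})$ is commutative). The only cosmetic difference is that the paper appeals to subadditivity of $r$ together with $r(d(a))=0$ from Corollary~\ref{theorem2}, whereas you use directly that $r(x+c)=r(x)$ for $c\in\mathrm{rad}(\mathcal{A})$; these are equivalent, and your route in fact yields the bound with constant $M=r(\delta(e))$ rather than the paper's $\alpha\beta\,r(\delta(e))$.
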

\begin{proof}
Let $(\delta,d)$ be a generalized derivation of ${\mathcal A}$. Since
the Banach algebra ${\mathcal A}/\mathrm{rad}({\mathcal A})$ is commutative, it follows from \cite{R25} that the spectral radius is subadditive and submultiplicative on ${\mathcal A}$. Suppose that $e\in{\mathcal A}$ is a right identity for ${\mathcal A}$. Then there are positive numbers $\alpha$ and $\beta$ such that
\begin{eqnarray*}
	r(\delta(a))&=&r(\delta(ae))\nonumber\\
	&=&r(a \delta(e)+d(a))\nonumber\\
	&\leq& \alpha(r(a\delta(e))+r(d(a))\nonumber\\
	&\leq& \alpha(\beta r(\delta(e))r(a)+r(d(a)),
\end{eqnarray*}
for all $a\in {\mathcal A}$. By Corollary \ref{theorem2} we have $r(d(a))=0$ for all $a\in {\mathcal A}$. Therefore, we obtain
$$r(\delta(a))\leq \alpha \beta r(\delta(e))r(a),$$
for all $a\in {\mathcal A}$. That is, $\delta$ is spectrally bounded.
\end{proof}

\begin{theorem}\label{theorem13}
	Let $(\delta,d)$ be a generalized derivation of ${\mathcal A}$. If ${\mathcal A}$ has a right identity, then the following statements are equivalent.
	\begin{itemize}
		\item[\rm(i)]
		$\delta({\mathcal A})\subseteq {\mathrm rad}({\mathcal A})$
		\item[\rm(ii)]
		$\delta$ is spectrally infinitesimal.
		\item[\rm(iii)]
		$\delta=d$.
	\end{itemize}
\end{theorem}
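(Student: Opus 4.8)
The plan is to prove the cycle (iii) $\Rightarrow$ (i) $\Rightarrow$ (ii) $\Rightarrow$ (iii); notice that the first two implications require nothing about a right identity, so that hypothesis enters only in the last step. The implication (iii) $\Rightarrow$ (i) is immediate from Theorem \ref{theorem1}, which gives $d({\mathcal A}) \subseteq {\rm rad}({\mathcal A})$. For (i) $\Rightarrow$ (ii) I would simply repeat the argument of Corollary \ref{theorem2}(ii): if $\delta({\mathcal A}) \subseteq {\rm rad}({\mathcal A}) = {\rm rann}({\mathcal A})$, then for each $a$ the element $\delta(a)$ lies in the right annihilator, hence $\delta(a)^2 = 0$ and so $r(\delta(a)) = 0$, i.e.\ $\delta$ is spectrally infinitesimal.

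The substance is (ii) $\Rightarrow$ (iii). The first observation is that $\psi := \delta - d$ is a right multiplier of ${\mathcal A}$, since
\[
\psi(ab) = \delta(ab) - d(ab) = \bigl(a\delta(b)+d(a)b\bigr) - \bigl(d(a)b+ad(b)\bigr) = a\psi(b) \qquad (a,b \in {\mathcal A}).
\]
If $e$ denotes a right identity of ${\mathcal A}$ and $u := \psi(e) = \delta(e) - d(e)$, then $\psi(a) = \psi(ae) = a\psi(e) = R_u(a)$ for every $a$, so $\delta(a) = d(a) + au$ for all $a \in {\mathcal A}$. I would then pass to the quotient algebra ${\mathcal A}/{\rm rad}({\mathcal A})$, which is commutative by hypothesis, semisimple (being a quotient by the radical), and unital with identity $e + {\rm rad}({\mathcal A})$, because a right identity of a commutative algebra is automatically a two-sided identity. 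Since $d({\mathcal A}) \subseteq {\rm rad}({\mathcal A})$ by Theorem \ref{theorem1}, reducing $\delta(e) = d(e) + u$ modulo the radical gives $\delta(e) + {\rm rad}({\mathcal A}) = u + {\rm rad}({\mathcal A})$. The spectral radius is unaffected by passage to ${\mathcal A}/{\rm rad}({\mathcal A})$, so hypothesis (ii) yields $r\bigl(u + {\rm rad}({\mathcal A})\bigr) = r(\delta(e)) = 0$. In a commutative semisimple Banach algebra a quasi-nilpotent element must vanish, whence $u \in {\rm rad}({\mathcal A}) = {\rm rann}({\mathcal A})$; consequently $au = 0$ for all $a \in {\mathcal A}$, and therefore $\delta(a) = d(a)$ for all $a$, that is, $\delta = d$.

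I expect the only mildly delicate points to be the two standard facts used at the end: that $r(x) = r\bigl(x + {\rm rad}({\mathcal A})\bigr)$ for every $x \in {\mathcal A}$ (a property of the radical, see e.g.\ \cite{bon} or \cite{dales}), and that the Gelfand transform is injective on a commutative semisimple Banach algebra, so that vanishing spectral radius forces the element to be $0$. Everything else is formal manipulation of the generalized-derivation identity together with Theorem \ref{theorem1}; the real structural insight is just that a generalized derivation differs from its associated derivation by a right multiplier, which in the presence of a right identity is nothing but right multiplication by the single element $\delta(e) - d(e)$.
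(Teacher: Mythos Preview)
Your proof is correct and follows the same cycle of implications as the paper, with (iii)$\Rightarrow$(i) and (i)$\Rightarrow$(ii) argued identically. The only genuine difference is in how you deduce $\delta(e)\in\mathrm{rad}({\mathcal A})$ from $r(\delta(e))=0$ in the step (ii)$\Rightarrow$(iii): the paper invokes Zemanek's submultiplicativity of the spectral radius to get $r(a\,\delta(e))=0$ for every $a$, and then appeals to the Bonsall--Duncan characterization of the radical (\cite[Proposition~1(ii)]{bon}); you instead pass to the commutative semisimple quotient ${\mathcal A}/\mathrm{rad}({\mathcal A})$ and use that a quasi-nilpotent element there must vanish. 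Your route is slightly more direct, avoiding the external result \cite{R25}, at the cost of invoking the (equally standard) invariance of the spectral radius under reduction modulo the radical. Your explicit identification of $\psi=\delta-d$ as the right multiplier $R_u$ is a nice conceptual packaging, though the paper achieves the same effect in one line via $\delta(a)=\delta(ae)=a\,\delta(e)+d(a)$.
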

\begin{proof}
(i)$\Rightarrow$(ii). Suppose that $\delta({\mathcal A})\subseteq \mathrm{rad}({\mathcal A})$.
Since $\mathrm{rad}({\mathcal A})={\rm rann}({\mathcal A})$, we conclude that $\delta$ is spectrally infinitesimal.

(ii)$\Rightarrow$(iii).	First we note that the Banach algebra
${\mathcal A}/\mathrm{rad}({\mathcal A})$ is commutative. Thus, it follows from \cite{R25} that the spectral radius is submultiplicative on ${\mathcal A}$. Therefore, there exists $\alpha>0$ such that for each $a\in {\mathcal A}$ we have
$$r(a \delta(e))\leq \alpha r(a)r(\delta(e))=0,$$
where $e\in{\mathcal A}$ is a right identity for ${\mathcal A}$. Hence, by \cite[Proposition 1(ii)]{bon}, we have $\delta(e)\in \mathrm{rad}({\mathcal A})={\rm rann}({\mathcal A})$. Thus
$$\delta(a)=\delta(ae)=a \delta(e)+d(a)e=d(a),$$
for all $a\in {\mathcal A}$. This shows that $\delta=d$.
The implication (iii)$\Rightarrow$(i) follows from Theorem \ref{theorem1}.
\end{proof}

Let $T$ be a right multiplier of ${\mathcal A}$ and let $0$ denotes the zero derivation of ${\mathcal A}$. Then it is easy to see that $(T,0)$ is a generalized derivation of ${\mathcal A}$. Thus, we have the following result as an immediate consequence of Theorem \ref{theorem13}.

\begin{corollary}\label{corollary9}
	Let $T$ be a right multiplier of ${\mathcal A}$. If ${\mathcal A}$ has a right identity, then the following statements are equivalent.
	\begin{itemize}
		\item[\rm(i)]
		$T({\mathcal A})\subseteq \mathrm{rad}({\mathcal A})$.
		\item[\rm(ii)]
		$T$ is spectrally infinitesimal.
		\item[\rm(iii)]
		$T$ is a derivation on ${\mathcal A}$.
		\item[\rm(iv)]
		$T=0$.
	\end{itemize}
\end{corollary}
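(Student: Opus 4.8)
The plan is to derive Corollary \ref{corollary9} directly from Theorem \ref{theorem13} by exploiting the observation made just before the statement, namely that if $T$ is a right multiplier of ${\mathcal A}$, then the pair $(T,0)$ is a generalized derivation of ${\mathcal A}$ with associated derivation $d=0$. This places us exactly in the setting of Theorem \ref{theorem13} with $(\delta,d)=(T,0)$, so the three conditions (i), (ii), (iii) of that theorem become, respectively, $T({\mathcal A})\subseteq\mathrm{rad}({\mathcal A})$, $T$ spectrally infinitesimal, and $T=0$ (the last because $\delta=d$ reads $T=0$). Thus the equivalence of (i), (ii) and (iv) here is immediate.

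It then remains only to fold in condition (iii), that $T$ is a derivation on ${\mathcal A}$. The implication (iv)$\Rightarrow$(iii) is trivial since the zero map is a derivation. For (iii)$\Rightarrow$(i) (or (iii)$\Rightarrow$(iv)), I would argue as follows: if $T$ is simultaneously a right multiplier and a derivation, then for all $a,b\in{\mathcal A}$ we have $aT(b)=T(ab)=T(a)b+aT(b)$, whence $T(a)b=0$ for all $a,b\in{\mathcal A}$; in particular $T({\mathcal A})\subseteq{\rm lann}({\mathcal A})$. Alternatively, and more in keeping with the paper's hypotheses, one simply invokes Theorem \ref{theorem1}: any derivation of ${\mathcal A}$ has its image in $\mathrm{rad}({\mathcal A})$, which is exactly (i). So (iii)$\Rightarrow$(i) requires no new work at all, and combined with the cycle (i)$\Leftrightarrow$(ii)$\Leftrightarrow$(iv) already supplied by Theorem \ref{theorem13}, all four conditions are equivalent.

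There is essentially no obstacle here: the corollary is a packaging of Theorem \ref{theorem13} specialized to generalized derivations of the form $(T,0)$, together with the trivial remark that the condition ``$\delta=d$'' degenerates to ``$T=0$'' and the trivial remark that $T=0$ is a derivation while any derivation lands in the radical by Theorem \ref{theorem1}. The only point deserving a sentence of care is spelling out why $(T,0)$ is a generalized derivation, but this is the identity $T(ab)=aT(b)+0\cdot b$, which is just the defining property of a right multiplier. I would therefore write the proof as: observe $(T,0)$ is a generalized derivation; apply Theorem \ref{theorem13} to get (i)$\Leftrightarrow$(ii)$\Leftrightarrow$(iii of that theorem, i.e. our (iv)); note (iv)$\Rightarrow$(iii) trivially and (iii)$\Rightarrow$(i) by Theorem \ref{theorem1}; conclude.

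\begin{proof}
Since $T$ is a right multiplier of ${\mathcal A}$, the pair $(T,0)$ is a generalized derivation of ${\mathcal A}$, where $0$ is the zero derivation. Applying Theorem \ref{theorem13} with $(\delta,d)=(T,0)$, we obtain that $T({\mathcal A})\subseteq\mathrm{rad}({\mathcal A})$, $T$ is spectrally infinitesimal, and $T=0$ (which is the condition $\delta=d$ in this case) are all equivalent. This gives (i)$\Leftrightarrow$(ii)$\Leftrightarrow$(iv). The implication (iv)$\Rightarrow$(iii) is trivial, since the zero map is a derivation of ${\mathcal A}$. Finally, (iii)$\Rightarrow$(i) follows at once from Theorem \ref{theorem1}: if $T$ is a derivation of ${\mathcal A}$, then $T({\mathcal A})\subseteq\mathrm{rad}({\mathcal A})$. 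Hence all four statements are equivalent.
\end{proof}
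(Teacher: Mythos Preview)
Your proof is correct and follows exactly the approach intended by the paper: the corollary is presented there as an immediate consequence of Theorem \ref{theorem13} applied to the generalized derivation $(T,0)$, with the extra condition (iii) handled via Theorem \ref{theorem1}. Your write-up simply makes these implicit steps explicit.
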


\section{$k$-Centralizing Generalized Derivations}
In this section, we investigate Posner's second theorem and
show that the zero map is the only centralizing derivation of ${\mathcal A}$.
We also investigate our results for centralizing generalized derivations and prove similar results.

We recall that the algebraic center of ${\mathcal A}$ is denoted by $Z({\mathcal A})$. Let $a,b\in {\mathcal A}$ and $k$ be a fixed positive integer. Set
$$[a,b]_{k}:=[[a,b]_{k-1},b],$$
where
$$[a,b]_{0}:=a,\qquad [a,b]:=ab-ba.$$
A mapping $T: {\mathcal A}\rightarrow {\mathcal A}$ is called $k$-centralizing (resp. $k$-commuting) if $[T(a),a^{k}]\in Z({\mathcal A})$ (resp. $[T(a),a^{k}]=0$) for all $a\in {\mathcal A}$. In the case $k=1$, $T$ is said to be centralizing (resp. commuting).

\begin{theorem}\label{theorem5}
	Let $d$ be a derivation of ${\mathcal A}$ and let $k$ be a positive integer. If ${\mathcal A}$ has a right identity, then the following statements are equivalent.
	\begin{itemize}
		\item[\rm(i)]
		$d=0$.
		\item[\rm(ii)]
		$d$ is $k$-centralizing.
		\item[\rm(iii)]
		$d$ is $k$-commuting.
	\end{itemize}
\end{theorem}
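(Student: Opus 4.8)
The plan is to prove $(i)\Rightarrow(iii)\Rightarrow(ii)\Rightarrow(i)$, where only the last implication carries real content. The first two are immediate: if $d=0$ then trivially $[d(a),a^k]=0$ for all $a$, so $d$ is $k$-commuting; and if $[d(a),a^k]=0$ then $[d(a),a^k]=0\in Z({\mathcal A})$, so $d$ is $k$-centralizing. The substance is in $(ii)\Rightarrow(i)$, and here the strategy is to exploit Theorem~\ref{theorem1}, which already tells us $d({\mathcal A})\subseteq\mathrm{rad}({\mathcal A})={\rm rann}({\mathcal A})$, combined with the presence of a right identity $e$.

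First I would record the structural consequence of Theorem~\ref{theorem1}: since $d(a)\in{\rm rann}({\mathcal A})$ for every $a$, we have $d(a)b=0$ for all $b\in{\mathcal A}$, and in particular $d(a)d(a)=0$ and $d(a)a=0$. Now I apply the derivation identity to $a=ae$ to get a useful relation at the right identity: for all $a\in{\mathcal A}$,
\[
d(a)=d(ae)=d(a)e+a\,d(e)=a\,d(e),
\]
using $d(a)e=0$ (since $d(a)\in{\rm rann}({\mathcal A})$ kills $e$). So $d$ is completely determined by the single element $c:=d(e)$ via $d(a)=ac$, and moreover $c\in{\rm rann}({\mathcal A})$, so that $ac\cdot b=0$ and also $c^2 = ?$ — note $d(e)=e\,d(e)$ gives $c=ec$, and $c\in{\rm rann}({\mathcal A})$ gives $c\cdot x=0$ for all $x$, hence in particular $[d(a),a^k]=[ac,a^k]=ac\,a^k-a^k\,ac=ac\,a^k$ since $a^k\,a\,c=0$ (the factor $c\in{\rm rann}$ is hit on the left by $a^k a$). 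Wait — $a^k(ac)= (a^{k+1})c=0$ because $c\in{\rm rann}({\mathcal A})$; so $[d(a),a^k]=a c a^k$.

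Next I would use the $k$-centralizing hypothesis: $aca^k\in Z({\mathcal A})$ for all $a$. Since $aca^k$ lies in ${\mathcal A}\cdot c\cdot{\mathcal A}\subseteq\mathrm{rad}({\mathcal A})$ (as $\mathrm{rad}({\mathcal A})$ is an ideal), and a central element of the radical — actually the cleaner route: $aca^k\in Z({\mathcal A})\cap\mathrm{rad}({\mathcal A})$, and on $X^*$-type algebras one expects $Z({\mathcal A})\cap\mathrm{rad}({\mathcal A})$ to be trivial, but I should not assume that. Instead I would substitute $a\mapsto e$ to get $ece^k = c e^k\cdot$ — hmm, better: substitute $a\mapsto e+a$ or use the linearization of $a\mapsto aca^k$. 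A robust approach: since $aca^k\in Z({\mathcal A})$, multiply on the right by any $b$ and use centrality to move it; alternatively, note that $aca^k\in\mathrm{rad}({\mathcal A})={\rm rann}({\mathcal A})$ means $aca^k$ is a right annihilator, hence $x(aca^k)=0$; combined with being central, $(aca^k)x=0$ too, so $aca^k\in{\rm rann}({\mathcal A})\cap{\rm lann}$, but in fact every element of $\mathrm{rad}({\mathcal A})$ that is also central and in ${\rm rann}$ — I would pick $a=e$ to reduce to showing $c\,e^k\cdot(\text{something})$ controls $c$; since $ce=c$ we get $[d(e),e^k]=ce^k$-type expression equal to... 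Actually the cleanest: from $d(a)=ac$ we directly want $c\in{\rm rann}({\mathcal A})$ to force $c=0$. Here I would invoke that in our setting ${\rm rann}({\mathcal A})=\mathrm{rad}({\mathcal A})$, so $c=d(e)\in\mathrm{rad}({\mathcal A})$ — but that alone does not give $c=0$. The centralizing condition must be used: $[ac,a^k]=aca^k\in Z({\mathcal A})$. Replacing $a$ by $a+e$ and extracting, or simply putting $a=e$: $[ce,e^k]=[c,e^k]=ce^k-e^kc=c-e^kc$ (using $ce=c$ so $ce^k=c$, and $e^kc$ — since $c\in{\rm rann}$, $e^kc$ need not vanish). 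Hmm, $e^k c$: the right annihilator kills things on the right, so $x c=?$ is not controlled. I'd set this aside and argue via the quotient instead.

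The main obstacle will be squeezing $d=0$ out of $c=d(e)$ being central-ish and radical; I expect the decisive step is to show $\delta(e)$ (here $d(e)=c$) lies in the radical \emph{and} that the $k$-centralizing condition, after the reduction $d(a)=ac$, forces $ac=0$ for all $a$. The likely mechanism, mirroring the proof of Theorem~\ref{theorem13}, is: the spectral radius is submultiplicative on ${\mathcal A}$ (since ${\mathcal A}/\mathrm{rad}({\mathcal A})$ is commutative, by \cite{R25}); the element $[d(a),a^k]=aca^k$ has spectral radius $0$ as it lies in ${\rm rann}({\mathcal A})$; being in $Z({\mathcal A})$, one can hope to conclude via \cite[Proposition 1(ii)]{bon} that each such product vanishes, and then a polarization/substitution argument (plug in $a=e$, then general $a$) yields $ac=0$, i.e.\ $d=0$. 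I would therefore organize $(ii)\Rightarrow(i)$ as: (a) Theorem~\ref{theorem1} plus right identity gives $d(a)=ad(e)$ with $d(e)\in{\rm rann}({\mathcal A})$; (b) compute $[d(a),a^k]=a\,d(e)\,a^k$ using the annihilator property; (c) use $k$-centralizing together with submultiplicativity of the spectral radius and \cite[Proposition 1(ii)]{bon} to kill these elements; (d) specialize and linearize in $a$ to obtain $d(e)=0$, hence $d=0$.
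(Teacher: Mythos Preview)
Your argument for $(ii)\Rightarrow(i)$ rests on a reversed reading of the right annihilator. By the paper's definition, $c\in{\rm rann}({\mathcal A})$ means $ac=0$ for every $a\in{\mathcal A}$, not $cb=0$ for every $b$. Hence from $d(a)\in{\rm rann}({\mathcal A})$ one obtains $b\,d(a)=0$ for all $b$, but \emph{not} $d(a)\,b=0$; in particular $d(a)e=d(a)$ (since $e$ is a right identity), not $0$. Your key reduction
\[
d(a)=d(ae)=d(a)e+a\,d(e)=a\,d(e)
\]
therefore fails: the correct computation is $d(a)=d(a)+a\,d(e)$, i.e.\ $a\,d(e)=0$ identically, so the formula $d(a)=a\,d(e)$ is vacuous (indeed it would force $d=0$ outright, without ever using the $k$-centralizing hypothesis, which should already signal trouble). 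Everything downstream---the expression $[d(a),a^k]=a\,d(e)\,a^k$, the linearization, the appeal to spectral submultiplicativity and \cite[Proposition~1(ii)]{bon}---collapses with it.

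The paper's route avoids spectral tools entirely. It first shows $(ii)\Rightarrow(iii)$: since $b\,d(a)=0$ for all $b$, one has $[d(a),a^k]=d(a)a^k=d(a^{k+1})\in{\rm rann}({\mathcal A})$; combined with the hypothesis $[d(a),a^k]\in Z({\mathcal A})$ and the observation that $Z({\mathcal A})\cap{\rm rann}({\mathcal A})=\{0\}$ (any such $z$ satisfies $z=ze=ez=0$), this gives $[d(a),a^k]=0$. For $(iii)\Rightarrow(i)$ the paper evaluates the $k$-commuting identity at $a=e$ and at $a=e+c$ for $c\in{\rm rann}({\mathcal A})$ (using $(e+c)^j=e+c$) to get $d(e)=0$ and $d(c)=0$ for all such $c$; then the decomposition $a=ea+(a-ea)$, with $a-ea\in{\rm rann}({\mathcal A})$ and $d(ea)=d(e)a+e\,d(a)=0$, yields $d(a)=0$.
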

\begin{proof}
The implication (i)$\Rightarrow$(ii) is trivial.

(ii)$\Rightarrow$(iii). Suppose that (ii) holds.
By assumption and Theorem \ref{theorem1}, we obtain
$$[d(a),a^{k}]=d(a)a^{k}=d(a^{k+1})\in {\rm rann}({\mathcal A})\cap Z({\mathcal A}).$$
Thus, $[d(a),a^{k}]=0$.

(iii)$\Rightarrow$(i). Let $d$ be $k$-commuting and let $e$ be a right identity of ${\mathcal A}$. Then
\begin{equation}
	d(e)=[d(e),e]=[d(e),e^{k}]=0.\label{(1)}
\end{equation}
Given $c\in {\rm rann}({\mathcal A})$, we obtain $(c+e)=(c+e)^{k}$. Thus,
\begin{equation}
	d(c)=[d(c+e),(c+e)]=[d(c+e),(c+e)^{k}]=0.\label{(2)}
\end{equation}
From (\ref{(1)}) and (\ref{(2)}), we get
$$d(a)=d(e a)+d(a-e a)=d(e) a+d(a-ea)=0,$$
for all $a\in {\mathcal A}$.
\end{proof}

\begin{corollary}\label{corollary6}
	Let $d$ be a derivation of ${\mathcal A}$. If ${\mathcal A}$ has a right identity, then the following statements are equivalent.
	\begin{itemize}
		\item[\rm(i)]
		$d=0$.
		\item[\rm(ii)]
		$d$ is centralizing.
		\item[\rm(iii)]
		For every $k\in {\Bbb N}$, $d$ is $k$-centralizing.
		\item[\rm(iv)]
		There exists $k\in {\Bbb N}$ such that $d$ is $k$-centralizing.
		\item[\rm(v)]
		For every $k\in {\Bbb N}$, $d$ is $k$-commuting.
		\item[\rm(vi)]
		There exists $k\in {\Bbb N}$ such that $d$ is $k$-commuting.
	\end{itemize}
\end{corollary}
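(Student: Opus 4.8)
The plan is to deduce the corollary entirely from Theorem~\ref{theorem5}, which already establishes, for any single fixed positive integer $k$, the equivalence of $d=0$, of $d$ being $k$-centralizing, and of $d$ being $k$-commuting. So the only task here is to arrange these equivalences over all $k$ simultaneously into one cycle of implications; in particular, observe that condition (ii) (``$d$ centralizing'') is precisely the case $k=1$ of being $k$-centralizing.

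First I would dispose of the trivial implications emanating from (i): if $d=0$ then $[d(a),a^{k}]=0\in Z({\mathcal A})$ for all $a\in{\mathcal A}$ and all $k\in{\Bbb N}$, which gives (i)$\Rightarrow$(iii) and (i)$\Rightarrow$(v); and (iii)$\Rightarrow$(iv), (v)$\Rightarrow$(vi), and (iii)$\Rightarrow$(ii) (specialize to $k=1$) are immediate weakenings, so in particular (i)$\Rightarrow$(ii) as well.

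Then I would close the cycle back to (i). For (iv)$\Rightarrow$(i): by hypothesis $d$ is $k$-centralizing for some particular $k\in{\Bbb N}$, and Theorem~\ref{theorem5} applied with that $k$ forces $d=0$; the same theorem gives (vi)$\Rightarrow$(i), and, taking $k=1$, also (ii)$\Rightarrow$(i). Collecting everything yields (i)$\Leftrightarrow$(ii)$\Leftrightarrow$(iii)$\Leftrightarrow$(iv) and (i)$\Leftrightarrow$(v)$\Leftrightarrow$(vi), so all six statements are equivalent.

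I do not anticipate any genuine obstacle: the substantive content (invoking Theorem~\ref{theorem1} to obtain $d({\mathcal A})\subseteq\mathrm{rad}({\mathcal A})={\rm rann}({\mathcal A})$, and the idempotent trick $c+e=(c+e)^{k}$ for $c\in{\rm rann}({\mathcal A})$ and $e$ a right identity) is already carried out in the proof of Theorem~\ref{theorem5}. The only point requiring a moment's care is checking that the ``for every $k$'' statements (iii) and (v) slot into the cycle without creating circular reasoning — they are implied by (i) directly and imply the ``there exists $k$'' statements (iv) and (vi), so no circularity occurs.
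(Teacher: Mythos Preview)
Your proposal is correct and matches the paper's approach: the paper states this as an immediate corollary of Theorem~\ref{theorem5} with no written proof, so the intended argument is exactly the bookkeeping you describe---applying Theorem~\ref{theorem5} at the appropriate values of $k$ and chaining the trivial implications.
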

\begin{theorem}\label{theorem6}
	If ${\mathcal A}$ has a right identity, then the following statements are equivalents.
	\begin{itemize}
		
		\item[\rm(i)]
		${\mathcal A}$ has an identity.
		\item[\rm(ii)]
		Any derivation of ${\mathcal A}$ is zero.
		\item[\rm(iii)]
		Any inner derivation of ${\mathcal A}$ is zero.
		\item[\rm(iv)]
		${\mathcal A}$ is commutative.
	\end{itemize}
\end{theorem}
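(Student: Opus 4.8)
My plan is to close the cycle (i) $\Rightarrow$ (ii) $\Rightarrow$ (iii) $\Rightarrow$ (iv) $\Rightarrow$ (i). Two of the four links are essentially definitional, and the whole argument rests on Theorem \ref{theorem1} together with the standing assumption $\mathrm{rad}({\mathcal A})={\rm rann}({\mathcal A})$.

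For (i) $\Rightarrow$ (ii) I would argue as follows. If ${\mathcal A}$ has a two-sided identity $e$, then every $c\in{\rm rann}({\mathcal A})$ satisfies $c=ec=0$, so ${\rm rann}({\mathcal A})=\{0\}$ and hence, by hypothesis, $\mathrm{rad}({\mathcal A})=\{0\}$. Given any derivation $d$ of ${\mathcal A}$, Theorem \ref{theorem1} yields $d({\mathcal A})\subseteq\mathrm{rad}({\mathcal A})=\{0\}$, i.e. $d=0$. (One can also see this directly: $d(e)=d(e)e+ed(e)=2d(e)$ forces $d(e)=0$, so $d(a)=d(ae)=ad(e)+d(a)e=d(a)e$, while $d(a)\in\mathrm{rad}({\mathcal A})={\rm rann}({\mathcal A})$ gives $d(a)=ed(a)=0$.) The implication (ii) $\Rightarrow$ (iii) is immediate, since inner derivations are derivations.

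For (iii) $\Rightarrow$ (iv): if every inner derivation of ${\mathcal A}$ vanishes, then in particular $d_a(b)=ba-ab=0$ for all $a,b\in{\mathcal A}$, which is precisely commutativity of ${\mathcal A}$. Finally, for (iv) $\Rightarrow$ (i) I would invoke the standing hypothesis that ${\mathcal A}$ has a right identity $e$: if ${\mathcal A}$ is commutative, then $ea=ae=a$ for every $a\in{\mathcal A}$, so $e$ is in fact a two-sided identity.

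I do not expect a genuine obstacle here. The one point to keep straight is that the right-identity hypothesis is really used only in the step (iv) $\Rightarrow$ (i); the structural heart of the theorem is the elementary observation that a two-sided identity forces ${\rm rann}({\mathcal A})=\mathrm{rad}({\mathcal A})=\{0\}$, after which Theorem \ref{theorem1} does all the work.
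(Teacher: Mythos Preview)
Your proof is correct and follows the same cycle (i) $\Rightarrow$ (ii) $\Rightarrow$ (iii) $\Rightarrow$ (iv) $\Rightarrow$ (i) as the paper; the only cosmetic difference is that in (i) $\Rightarrow$ (ii) the paper applies Theorem \ref{theorem1} and then writes $d(a)=ed(a)=0$ directly (since $d(a)\in{\rm rann}({\mathcal A})$ and $e$ is a left identity), rather than first concluding $\mathrm{rad}({\mathcal A})=\{0\}$ as you do, but this is the same idea.
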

\begin{proof}
The implications (iii)$\Rightarrow$(iv) and (v)$\Rightarrow$(i) are obvious.

(i)$\Rightarrow$(ii).
Let $e$ be an identity for ${\mathcal A}$ and let $d$ be a derivation of ${\mathcal A}$. Then by Theorem \ref{theorem1}, we have $d(a)=ed(a)=0$ for all $a\in{\mathcal A}$.

(iv)$\Rightarrow$(v). Suppose that any inner derivation of ${\mathcal A}$ is zero. Then $d_a(b)=ba-ab=0$ for all $a,b\in {\mathcal A}$. This implies that ${\mathcal A}$ is commutative.
\end{proof}
\begin{lemma}\label{lemma3}
	Let $a,b,c\in {\mathcal A}$. Then $[a,b]\in {\rm rann}({\mathcal A})$ and consequently $cab=cba.$
\end{lemma}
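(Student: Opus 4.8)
The plan is to read off both assertions directly from the two standing hypotheses on $\mathcal{A}$, namely that $\mathcal{A}/\mathrm{rad}(\mathcal{A})$ is commutative and that $\mathrm{rad}(\mathcal{A}) = {\rm rann}(\mathcal{A})$. First I would let $q\colon \mathcal{A}\to\mathcal{A}/\mathrm{rad}(\mathcal{A})$ denote the canonical quotient homomorphism. Since the quotient algebra is commutative, $q(ab)=q(a)q(b)=q(b)q(a)=q(ba)$, hence $q([a,b])=q(ab-ba)=0$, so $[a,b]\in\ker q=\mathrm{rad}(\mathcal{A})$. Invoking the hypothesis $\mathrm{rad}(\mathcal{A})={\rm rann}(\mathcal{A})$ then gives $[a,b]\in{\rm rann}(\mathcal{A})$, which is the first claim.

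For the ``consequently'' part, I would simply unwind the definition of the right annihilator: since $[a,b]\in{\rm rann}(\mathcal{A})$, we have $x\cdot[a,b]=0$ for every $x\in\mathcal{A}$, and in particular $c\,(ab-ba)=c[a,b]=0$, so $cab=cba$. No limiting argument, no appeal to Singer--Wermer-type results, and no use of a right identity is needed here, so there is essentially no obstacle; the lemma is a bookkeeping observation that will be used repeatedly in the sequel (e.g.\ to manipulate commutators inside products without worrying about order once a factor is present on the left). The only point worth stating carefully is the direction of the annihilator: ${\rm rann}(\mathcal{A})$ consists of elements killed by left multiplication by \emph{all} of $\mathcal{A}$, which is exactly what licenses replacing $ab$ by $ba$ after an arbitrary left factor $c$.

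If one wanted to avoid even mentioning $q$ explicitly, I could phrase it as: for $a,b\in\mathcal{A}$ the cosets $a+\mathrm{rad}(\mathcal{A})$ and $b+\mathrm{rad}(\mathcal{A})$ commute in $\mathcal{A}/\mathrm{rad}(\mathcal{A})$, so $ab-ba\in\mathrm{rad}(\mathcal{A})={\rm rann}(\mathcal{A})$, and then conclude as above. Either formulation is a two-line argument.
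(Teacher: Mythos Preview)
Your proof is correct. The paper, however, takes a slightly different (and in fact heavier) route: it observes that $[a,b]=d_b(a)$ for the inner derivation $d_b$ and then invokes Theorem~\ref{theorem1} to conclude $d_b(a)\in\mathrm{rad}(\mathcal{A})={\rm rann}(\mathcal{A})$. Since Theorem~\ref{theorem1} ultimately rests on Thomas's solution of the Singer--Wermer conjecture, the paper's argument, although equally short to write down, is logically less elementary than yours. Your approach reads the conclusion off directly from the commutativity of $\mathcal{A}/\mathrm{rad}(\mathcal{A})$ and the identity $\mathrm{rad}(\mathcal{A})={\rm rann}(\mathcal{A})$, bypassing any derivation machinery; what the paper's route buys is thematic consistency (everything is phrased in terms of derivations), while yours buys self-containment and shows the lemma is really a trivial consequence of the standing hypotheses alone.
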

\begin{proof}
Consider the inner derivation $d_{b}$ of ${\mathcal A}$. Then by Theorem \ref{theorem1}, $[a,b]=d_{b}(a)\in {\rm rann}({\mathcal A})$. Thus, $cab-cba=cd_{b}(a)=0$. 
It follows that $cab=cba$.
\end{proof}
\begin{theorem}\label{therorem7}
	Let $d$ be a derivation of ${\mathcal A}$. If ${\mathcal A}$ has a right identity, then the following statements are equivalent.
	\begin{itemize}
		\item[\rm(i)]
		$d$ is an inner derivation.
		\item[\rm(ii)]
		There exists $b_{0}\in {\mathcal A}$ such that for each $k\in {\Bbb N}$, the mapping $a\mapsto d(a)+b_{0} a$ is $k$-commuting.
		\item[\rm(iii)]
		There exist $b_{0}\in {\mathcal A}$ and $k\in {\Bbb N}$ such that the mapping $a\mapsto d(a)+b_{0} a$ is $k$-commuting.
		\item[\rm(iv)]
		There exist $b_{0}\in {\mathcal A}$ and $k\in {\Bbb N}$ such that the mapping $a\mapsto d(b)+b_{0} a$ is $k$-centralizing.
	\end{itemize}
\end{theorem}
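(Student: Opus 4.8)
The plan is to reduce all four statements to the single identity
\[
[\,d(a)+b_0a,\ a^{k}\,]=(d-d_{b_0})(a^{k+1})\qquad(a\in{\mathcal A},\ k\in{\Bbb N}),
\]
valid for every $b_0\in{\mathcal A}$ (I read the map in statement (iv) as $a\mapsto d(a)+b_0a$), and then to read off the equivalences from Theorems~\ref{theorem1} and \ref{theorem5}. Since $d_{b_0}$ is an inner derivation, $\delta:=d-d_{b_0}$ is again a derivation of ${\mathcal A}$, and the point of the identity is that the map $a\mapsto d(a)+b_0a$ has exactly the same value of $[\,\cdot\,,a^{k}]$ as the derivation $\delta$; consequently $a\mapsto d(a)+b_0a$ is $k$-commuting (resp.\ $k$-centralizing) precisely when $\delta$ is.

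To prove the identity I would first invoke Theorem~\ref{theorem1} and the standing hypothesis $\mathrm{rad}({\mathcal A})={\rm rann}({\mathcal A})$ to get $d(a)\in{\rm rann}({\mathcal A})$, so that $a\,d(a)=0$; this gives $a^{k}d(a)=0$ for $k\geq1$ and, through the Leibniz rule, $d(a)a^{k}=d(a^{k+1})$. Expanding
\[
[\,d(a)+b_0a,\ a^{k}\,]=d(a)a^{k}+b_0a^{k+1}-a^{k}d(a)-a^{k}b_0a,
\]
the term $a^{k}d(a)$ vanishes, the term $d(a)a^{k}$ equals $d(a^{k+1})$, and $a^{k}b_0a=a^{k+1}b_0$ by Lemma~\ref{lemma3}; hence the right-hand side is $d(a^{k+1})+b_0a^{k+1}-a^{k+1}b_0=d(a^{k+1})-[a^{k+1},b_0]=(d-d_{b_0})(a^{k+1})$. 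The same computation applied to the derivation $\delta$ (note $\delta(a)\in{\rm rann}({\mathcal A})$ as well) yields $[\delta(a),a^{k}]=\delta(a^{k+1})$, so indeed $[\,d(a)+b_0a,\ a^{k}\,]=[\delta(a),a^{k}]$.

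Granting this, I would finish as follows. (i)$\Rightarrow$(ii): write $d=d_{b_0}$ for a suitable $b_0\in{\mathcal A}$; then $\delta=0$, so $a\mapsto d(a)+b_0a$ is $k$-commuting for every $k$. The implications (ii)$\Rightarrow$(iii) and (iii)$\Rightarrow$(iv) are immediate, the second because a $k$-commuting map is automatically $k$-centralizing ($0\in Z({\mathcal A})$). (iv)$\Rightarrow$(i): if $a\mapsto d(a)+b_0a$ is $k$-centralizing for some $b_0\in{\mathcal A}$ and $k\in{\Bbb N}$, then by the identity $\delta=d-d_{b_0}$ is a $k$-centralizing derivation of ${\mathcal A}$, and since ${\mathcal A}$ has a right identity Theorem~\ref{theorem5} forces $\delta=0$; that is, $d=d_{b_0}$ is inner.

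The step I expect to be the main obstacle is the computation of $[\,d(a)+b_0a,\ a^{k}\,]$: one must use that ${\rm rann}({\mathcal A})$ gives annihilation only on the left, so that $d(a)a^{k}$ does \emph{not} vanish but equals $d(a^{k+1})$ (which lies in ${\rm rann}({\mathcal A})$ yet is in general nonzero), and that Lemma~\ref{lemma3} only permits swapping the two right-most factors of a product — exactly enough to reduce $a^{k}b_0a$ and to kill $a^{k}d(a)$, but not something to apply carelessly elsewhere. The rest is bookkeeping together with the cited theorems.
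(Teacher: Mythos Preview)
Your proof is correct and follows essentially the same route as the paper's: both introduce the auxiliary derivation $\delta=d-d_{b_0}$ (the paper calls it $D$ and writes it as $d(a)-[a,b_0]$), show via Theorem~\ref{theorem1} and Lemma~\ref{lemma3} that $[\,d(a)+b_0a,\ a^{k}\,]=[\delta(a),a^{k}]$, and then invoke Theorem~\ref{theorem5} to close the loop (iv)$\Rightarrow$(i). Your version makes the intermediate identity $[\delta(a),a^{k}]=\delta(a^{k+1})$ explicit, whereas the paper compresses this to the single line $[D(a),a^{k}]=D(a)a^{k}=[d(a)+b_0a,a^{k}]$, but the ingredients and the logical skeleton are the same.
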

\begin{proof}
(i)$\Rightarrow$(ii). Suppose that $d$ is inner. Then there exists $b_{0}\in {\mathcal A}$ such that  $d(a)=a b_{0}-b_{0} a$, for all $a\in {\mathcal A}$. By Lemma \ref{lemma3}, for each $k\in {\Bbb N}$, we have
\begin{eqnarray*}
	[d(a)+b_{0} a,a^{k}]&=&[a b_{0},a^{k}]\\
	&=& a b_{0} a^{k}-a^{k} a b_{0}\\
	&=& aa^{k}b_{0}-a^{k} a b_{0}\\
	&=& a^{k+1} b_{0}-a^{k+1} b_{0}=0,
\end{eqnarray*}
which implies (ii). \\
The implications (ii)$\Rightarrow$ (iii) $\Rightarrow$ (iv) are trivial.

Suppose now that (iv) holds. Consider the map $D:{\mathcal A}\rightarrow {\mathcal A}$ defined by $D(a):=d(a)-[a,b_{0}]$. It is easy to see that $D$ is a derivation of ${\mathcal A}$. Thus
$$[D(a),a^{k}]=D(a) a^{k}=[d(a)+b_{0}a,a^{k}]\in Z({\mathcal A}).$$
We now invoke Theorem \ref{theorem5} to conclude that $D=0$, which implies that $d$ is inner.
\end{proof}

We denote by $\mathrm{InnD}({\mathcal A})$ the space of all inner derivations of ${\mathcal A}$.  
\begin{theorem}\label{theorem8}
	Suppose that ${\mathcal A}$ has a right identity. Then ${\rm InnD}({\mathcal A})$ is continuously
	linearly isomorphic to ${{\mathcal A}}/{Z({\mathcal A})}$.
\end{theorem}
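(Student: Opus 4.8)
The plan is to write down the natural candidate for the isomorphism, namely the map $\Phi : {\mathcal A}/Z({\mathcal A}) \to {\rm InnD}({\mathcal A})$ sending $a + Z({\mathcal A})$ to the inner derivation $d_a$, and then check that it is well defined, linear, bijective and continuous with continuous inverse. Well-definedness and injectivity are two sides of the same coin: $d_a = d_b$ means $[c,a-b]=0$ for every $c\in{\mathcal A}$, i.e. $(a-b)c = c(a-b)$ for all $c$; combining this with Lemma~\ref{lemma3} (which gives $[x,y]\in{\rm rann}({\mathcal A})$ for all $x,y$, hence $c(a-b)=(a-b)c$ holds automatically only up to the right annihilator) one sees that $d_a=d_b$ precisely when $a-b\in Z({\mathcal A})$. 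Surjectivity is immediate from the definition of ${\rm InnD}({\mathcal A})$, and linearity of $\Phi$ is clear from $d_{a+\lambda b}=d_a+\lambda d_b$.

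Continuity of $\Phi$ follows from the estimate $\|d_a\| = \sup_{\|c\|\le 1}\|ca-ac\| \le 2\|a\|$, which descends to the quotient norm to give $\|\Phi(a+Z({\mathcal A}))\|\le 2\,\|a+Z({\mathcal A})\|$. The one genuinely nontrivial point — and the step I expect to be the main obstacle — is continuity of the inverse, equivalently a lower bound $\|d_a\|\ge \kappa\,\|a+Z({\mathcal A})\|$ for some $\kappa>0$. Here the hypothesis that ${\mathcal A}$ has a right identity $e$ is essential. The idea is that for $c\in {\rm rann}({\mathcal A})={\rm rad}({\mathcal A})$ we have $d_a(c) = ca - ac = -ac$, and more usefully one should test against $c$'s built from $e$: since $x - xe \in {\rm rann}({\mathcal A})$ for every $x$ (because $(x-xe)y = xy - xey$ and one must argue $xey=xy$, which again comes from $y-ey\in{\rm rann}$ via Lemma~\ref{lemma3} applied carefully), one can try to reconstruct the coset of $a$ from the values of $d_a$ on a suitable bounded set. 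Concretely, I would aim to show that the quotient map ${\mathcal A}\to{\mathcal A}/Z({\mathcal A})$ factors through $a\mapsto d_a$ in a bounded way by exhibiting, for each $a$, an element $c_a$ of norm comparable to $1$ with $\|d_a(c_a)\|\ge \kappa\,{\rm dist}(a,Z({\mathcal A}))$; failing an explicit such $c_a$, the cleaner route is to invoke the open mapping theorem — ${\rm InnD}({\mathcal A})$ is a subspace of the bounded operators on ${\mathcal A}$, and if one first checks it is \emph{closed} (hence a Banach space), then the continuous linear bijection $\Phi$ is automatically open, giving the continuous inverse for free.

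Accordingly, the cleanest structure for the write-up is: (1) define $\Phi$ and verify it is a well-defined linear bijection, using Lemma~\ref{lemma3} for the kernel computation; (2) verify $\Phi$ is bounded by the $2\|a\|$ estimate; (3) show ${\rm InnD}({\mathcal A})$ is closed in $\mathcal B({\mathcal A})$, or otherwise complete, so that the open mapping theorem applies; (4) conclude that $\Phi^{-1}$ is continuous, whence $\Phi$ is a continuous linear isomorphism. Step (3) is where I expect the real work: closedness of the space of inner derivations is not automatic in general, so one likely needs to use the special structure here — that ${\mathcal A}/Z({\mathcal A})$ is, via $\Phi$, identified with ${\rm InnD}({\mathcal A})$ and that ${\mathcal A}/Z({\mathcal A})$ is complete because $Z({\mathcal A})$ is closed — i.e. the completeness is imported \emph{from} the domain rather than verified intrinsically in $\mathcal B({\mathcal A})$, at which point one is back to needing the lower bound directly. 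I would therefore first attempt the explicit lower bound using the right identity, and only fall back on the open-mapping argument if a clean test element $c_a$ cannot be produced.
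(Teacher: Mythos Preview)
Your approach for steps (1) and (2) --- defining $\Phi(a+Z({\mathcal A}))=d_a$, checking it is a linear bijection, and bounding $\|d_a\|\le 2\|a+Z({\mathcal A})\|$ --- is exactly what the paper does. Two remarks, though.

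First, your invocation of Lemma~\ref{lemma3} for well-definedness and injectivity is superfluous and slightly muddled: $d_a=0$ means $ca-ac=0$ for all $c$, which is literally the definition of $a\in Z({\mathcal A})$. No structural input from the paper's hypotheses is needed here, and the paper's proof handles this in one line.

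Second, and more importantly, the paper's proof \emph{stops} after establishing continuity of $\Phi$; it never addresses continuity of $\Phi^{-1}$, and in fact the right-identity hypothesis is not used anywhere in the written argument. So either the authors intend ``continuously linearly isomorphic'' to mean merely ``there exists a continuous linear bijection'' (which is all they prove), or their proof is incomplete on exactly the point you identify as the crux. Your steps (3)--(4) therefore go beyond what the paper shows. Your instinct that the open-mapping route is circular here is correct --- completeness of ${\rm InnD}({\mathcal A})$ in the operator norm is precisely the issue, and it cannot be imported from ${\mathcal A}/Z({\mathcal A})$ without already knowing the lower bound --- and your fallback of seeking an explicit test element using the right identity is the natural line of attack. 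But the paper simply does not carry this out. If your goal is to match the paper's proof, you may drop steps (3) and (4) entirely.
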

\begin{proof}
Consider the mapping $\Gamma$ from ${{\mathcal A}}/{Z({\mathcal A})}$ into ${\rm Inn}D({\mathcal A})$ defined by $\Gamma(a+Z({\mathcal A}))=d_{a}$. It is clear that $\Gamma$ is
onto. To check the injectivity, suppose that $\Gamma(a+Z({\mathcal A}))=0$ for some $a\in {\mathcal A}$. Then $d_{a}(b)=b a-a b=0$, for all $b\in {\mathcal A}$, which implies that $a\in Z({\mathcal A})$. Thus, $\Gamma$ is injective.
Now, let $a,b\in {\mathcal A}$ and $z\in Z({\mathcal A})$. Then
\begin{eqnarray*}
	\|d_{a}(b)\|&=&\| ba-ab\| \\
	&\leq& \| ba-zb\|+\| zb-ab\| \\
	&=&\| ba-bz\| +\| z b-a b\| \\
	& \leq& \| b\| \| a-z\| +\| z-a\| \| b\| \\
	&=& 2\| b\|\| a-z\|.
\end{eqnarray*}
This implies that
$$\| \Gamma(a+Z({\mathcal A}))\| =\| d_{a}\| \leq 2\| a-z\|\quad (a\in {\mathcal A}, z\in Z({\mathcal A})).$$
Therefore,
\begin{eqnarray*}
	\| \Gamma(a+Z({\mathcal A}))\|\leq 2\| a+Z({\mathcal A})\|.
\end{eqnarray*}
This shows that the map is continuous.
\end{proof}

\begin{theorem}\label{theorem8}
	Let $(\delta,d)$ be a generalized derivation of ${\mathcal A}$ and $k\in {\Bbb N}$. If ${\mathcal A}$ has a right identity, then  the following statements are equivalent.
	\begin{itemize}
		\item[\rm(i)]
		$\delta$ is $k$-commuting.
		\item[\rm(ii)]
		$\delta$ is $k$-centralizing.
		\item[\rm(iii)]
		$\delta$ is a right multiplier.
		\item[\rm(iv)]
		There exists $b\in {\mathcal A}$ such that $\delta=R_b$.
	\end{itemize}
\end{theorem}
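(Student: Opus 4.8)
The plan is to close the cycle (i)$\Rightarrow$(ii)$\Rightarrow$(iv)$\Rightarrow$(iii)$\Rightarrow$(i), using the right identity $e$ of ${\mathcal A}$ to split $\delta$ into a right-multiplier part plus a derivation. Two of the links are immediate: (i)$\Rightarrow$(ii) holds because $0\in Z({\mathcal A})$, and (iv)$\Rightarrow$(iii) is the routine check $R_{b}(ac)=acb=aR_{b}(c)$.

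For the substantive implication (ii)$\Rightarrow$(iv), I would first note that, setting $b=e$ in the defining identity $\delta(ab)=a\delta(b)+d(a)b$ and using $ae=a$ together with $d(a)e=d(a)$, one obtains
$$\delta(a)=a\delta(e)+d(a)\qquad(a\in{\mathcal A}).$$
Then, for the fixed positive integer $k$,
$$[\delta(a),a^{k}]=[a\delta(e),a^{k}]+[d(a),a^{k}],$$
and the first summand vanishes: by Lemma \ref{lemma3} we have $a\delta(e)a^{k}=aa^{k}\delta(e)=a^{k+1}\delta(e)=a^{k}a\delta(e)$, so $[a\delta(e),a^{k}]=0$. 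Hence, if $\delta$ is $k$-centralizing, then $[d(a),a^{k}]=[\delta(a),a^{k}]\in Z({\mathcal A})$ for every $a\in{\mathcal A}$; that is, the derivation $d$ is $k$-centralizing. Since ${\mathcal A}$ has a right identity, Theorem \ref{theorem5} forces $d=0$, and therefore $\delta(a)=a\delta(e)=R_{\delta(e)}(a)$ for all $a\in{\mathcal A}$, which is precisely (iv) with $b=\delta(e)$.

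It remains to treat (iii)$\Rightarrow$(i). If $\delta$ is a right multiplier, then $\delta(a)=\delta(ae)=a\delta(e)$, and the same computation based on Lemma \ref{lemma3} gives $[\delta(a),a^{k}]=[a\delta(e),a^{k}]=0$, so $\delta$ is $k$-commuting.

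The one step carrying real content is (ii)$\Rightarrow$(iv), and the point to get right there is that the right identity lets us write $\delta=R_{\delta(e)}+d$ with $d$ still a derivation, while the summand $R_{\delta(e)}$ makes no contribution to $[\delta(a),a^{k}]$ by virtue of the ``commutativity modulo the right annihilator'' recorded in Lemma \ref{lemma3}; once this is noticed, the claim collapses onto the already established Theorem \ref{theorem5}. I do not expect any analytic obstacle, since neither Theorem \ref{theorem5} nor Lemma \ref{lemma3} requires continuity of the maps involved.
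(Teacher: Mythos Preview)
Your argument is correct, but it follows a genuinely different route from the paper. The paper establishes the cycle (i)$\Rightarrow$(ii)$\Rightarrow$(i)$\Rightarrow$(iii)$\Rightarrow$(iv)$\Rightarrow$(i): for (ii)$\Rightarrow$(i) it proves the inductive identity $[[\delta(a),a]_{i-1},a]=[\delta(a),a^{i}]$ and then uses $Z({\mathcal A})\cap{\rm rann}({\mathcal A})=\{0\}$; for (i)$\Rightarrow$(iii) it works directly with elements $e+c$, $c\in{\rm rann}({\mathcal A})$, exploiting that each such element is again a right identity and an idempotent, to deduce $d(e)=0$ and $d(c)=0$, and hence $d=0$. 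Your key move is instead the decomposition $\delta=R_{\delta(e)}+d$ together with the observation (via Lemma~\ref{lemma3}) that $R_{\delta(e)}$ is already $k$-commuting, so the $k$-centralizing hypothesis transfers to $d$ and Theorem~\ref{theorem5} finishes the job. This is more economical and modular: you reuse Theorem~\ref{theorem5} rather than reproduce its mechanism, and you never need the inductive commutator identity. The paper's approach, on the other hand, is self-contained at this step and makes the role of the right identity more explicit.
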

\begin{proof}
(i)$\Rightarrow$(ii). This is trivial.

(ii) $\Rightarrow$ (i). By induction and paying attention to the fact that $[a,b]\in {\rm rann}({\mathcal A})$ for all $a,b\in {\mathcal A}$, we have that
\begin{equation}
	[[\delta(a),a]_{i-1},a]=[\delta(a),a^{i}]\quad (a\in {\mathcal A}, i\in {\Bbb N}). \label{f1}
\end{equation}
Hence,
$$[[\delta(a),a]_{k-1},a]=[\delta(a),a^{k}]\in Z({\mathcal A})\quad (a\in {\mathcal A}).$$
This implies that the mapping $a\longmapsto [\delta(a),a]_{k-1}$
is centralizing. Using (\ref{f1}), we obtain
$$[\delta(a),a^{k}]=[[\delta(a),a]_{k-1},a]\in Z({\mathcal A})\cap {\rm rann}({\mathcal A})\quad (a\in {\mathcal A}).$$
This implies that $[\delta(a),a^{k}]=0$ for all $a\in {\mathcal A}$. Therefore, $\delta$ is $k$-commuting.

(i)$\Rightarrow$(iii).
Let $e$ be a right identity of ${\mathcal A}$ and $c\in {\rm rann}({\mathcal A})$. Since $(e+c)^{i}=e+c$ for all $i\in {\Bbb N}$, it follows that
\begin{eqnarray*}
	\delta(e+c)&=&\delta(e+c) (e+c)^{k}=(e+c)^{k} \delta(e+c)\\
	&=&\delta((e+c)^{k+1})-d((e+c)^{k}) (e+c)\\
	&=&\delta(e+c)-d(e+c).
\end{eqnarray*}
This shows that $d(c)=-d(e)$. Furthermore,
$$\delta (e)=e \delta(e)+d(e)=\delta(e) e+d(e)=\delta(e)+d(e).$$
Thus $d(e)=0$. Hence $d(c)=0$, which implies that $d(a-e a)=0$ for all $a\in {\mathcal A}$. Thus
$$d(a)=d(e a)=d(e) a+e d(a)=0\quad (a\in {\mathcal A}).$$
This implies that
$$\delta(a b)=a \delta(b)+d(a) b=a \delta(b)\quad (a,b\in {\mathcal A});$$
that is $\delta$ is a right multiplier.

(iii)$\Rightarrow$(iv).
Let $\delta$ be a right multiplier and let $e$ be a right identity of ${\mathcal A}$. Then it is straightforward to show that $\delta=R_{\delta(e)}$.

(iv) $\Rightarrow$(i).
Using Lemma \ref{lemma3}, we can see that
\begin{eqnarray*}
	[\delta(a),a^{k}]&=&\delta(a) a^{k}-a^{k} \delta(a)\\
	&=& a b a^{k}-a^{k} a b\\
	&=& a^{k+1} b-a^{k+1} b=0\quad (a\in {\mathcal A}).
\end{eqnarray*}
Thus, $\delta$ is $k$-commuting.
\end{proof}

As an immediate consequence of Theorem \ref{theorem8}, we have the following result.
\begin{corollary}\label{corollary7}
	Let $(\delta,d)$ be a generalized derivation of ${\mathcal A}$. If ${\mathcal A}$ has a right identity, then  the following statements are equivalent.
	\begin{itemize}
		\item[\rm(i)]
		$\delta$ is centralizing.
		\item[\rm(ii)]
		For every $k\in {\Bbb N}$, $\delta$ is $k$-centralizing.
		\item[\rm(iii)]
		There exists $k\in {\Bbb N}$ such that $\delta$ is $k$-centralizing.
		\item[\rm(iv)]
		For every $k\in {\Bbb N}$, $\delta$ is $k$-commuting.
		\item[\rm(v)]
		There exists $k\in {\Bbb N}$ such that $\delta$ is $k$-commuting.
	\end{itemize}
\end{corollary}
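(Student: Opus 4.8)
The plan is to read Corollary \ref{corollary7} directly off Theorem \ref{theorem8}, exploiting the fact that the two ``structural'' conditions in that theorem---``$\delta$ is a right multiplier'' and ``$\delta=R_b$ for some $b\in\mathcal{A}$''---make no reference to the integer $k$. Consequently, for \emph{each} fixed $k$, being $k$-centralizing and being $k$-commuting are both equivalent to one and the same $k$-free property, and hence they are equivalent to one another for all $k$ at once.

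First I would record the trivial implications: by definition ``centralizing'' is the case $k=1$ of ``$k$-centralizing'', so (i) is a special instance of (iii); moreover (ii) $\Rightarrow$ (i), (ii) $\Rightarrow$ (iii), (iv) $\Rightarrow$ (v), and (since $0\in Z(\mathcal{A})$, a $k$-commuting map is $k$-centralizing) also (iv) $\Rightarrow$ (ii) and (v) $\Rightarrow$ (iii). It therefore suffices to show that one of the ``there exists $k$'' hypotheses propagates to all $k$. So suppose (iii) holds, i.e.\ $\delta$ is $k_0$-centralizing for some particular $k_0\in\mathbb{N}$. Applying Theorem \ref{theorem8} with $k=k_0$ produces an element $b\in\mathcal{A}$ with $\delta=R_b$. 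Since this conclusion is $k_0$-free, feeding it back into Theorem \ref{theorem8} with an arbitrary $k\in\mathbb{N}$ shows that $\delta$ is $k$-commuting, hence $k$-centralizing, for every $k$; thus (iii) $\Rightarrow$ (ii) and (iii) $\Rightarrow$ (iv), and in particular (iii) $\Rightarrow$ (i). The same run starting from (v) gives (v) $\Rightarrow$ (ii) and (v) $\Rightarrow$ (iv). Chaining the implications, all five statements are equivalent.

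The hard part has in effect already been carried out inside Theorem \ref{theorem8}; at the level of the corollary there is no analytic obstacle whatsoever. The only thing to be careful about is the bookkeeping of definitions---recalling that ``centralizing'' means ``$1$-centralizing'' and that every $k$-commuting map is automatically $k$-centralizing---so that items (i), (iv) and (v) are correctly slotted into the cycle of implications that pivots on the uniform characterisation ``$\delta=R_b$''.
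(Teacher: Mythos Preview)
Your proposal is correct and matches the paper's approach exactly: the paper presents Corollary~\ref{corollary7} as ``an immediate consequence of Theorem~\ref{theorem8}'' with no further proof, and your argument spells out precisely that immediacy by pivoting on the $k$-free condition $\delta=R_b$.
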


By ${\frak C}({\mathcal A})$ we denote the set of all centralizing generalized derivations of ${\mathcal A}$. By Theorem \ref{theorem8}, we conclude that the product of two elements of ${\frak C}({\mathcal A})$ is always a centralizing generalized derivation of ${\mathcal A}$. Hence the space ${\frak C}({\mathcal A})$ is a Banach algebra as a subalgebra of $B({\mathcal A})$, the space of all bounded linear maps on ${\mathcal A}$.
\begin{theorem}\label{theorem9}
	Suppose that ${\mathcal A}$ has a right identity.	Then the Banach algebra ${\frak C}({\mathcal A})$ is  isomorphic to ${\mathcal A}/\mathrm{rad}({\mathcal A})$.
\end{theorem}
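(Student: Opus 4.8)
The plan is to realise the isomorphism explicitly through the map $b+\mathrm{rad}({\mathcal A})\mapsto R_b$. First I would note that Theorem \ref{theorem8} identifies ${\frak C}({\mathcal A})$ with the set $\{R_b:b\in{\mathcal A}\}$ of right multiplications: a generalized derivation is centralizing precisely when it equals $R_b$ for some $b\in{\mathcal A}$, and conversely each $R_b$ is the centralizing generalized derivation $(R_b,0)$. Next, I would define $\Phi:{\mathcal A}/\mathrm{rad}({\mathcal A})\to{\frak C}({\mathcal A})$ by $\Phi(b+\mathrm{rad}({\mathcal A}))=R_b$. Since $R_c=0$ if and only if $xc=0$ for all $x\in{\mathcal A}$, i.e. if and only if $c\in{\rm rann}({\mathcal A})=\mathrm{rad}({\mathcal A})$, the assignment $b\mapsto R_b$ has kernel exactly $\mathrm{rad}({\mathcal A})$; this simultaneously shows that $\Phi$ is well defined and injective, while surjectivity is the identification just made.

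The one step that uses the standing hypotheses in an essential way is multiplicativity, because $b\mapsto R_b$ is \emph{a priori} only an anti-homomorphism: $R_aR_b=R_{ba}$. Here I would invoke Lemma \ref{lemma3}, which gives $[b,a]=ba-ab\in{\rm rann}({\mathcal A})$, whence $R_{ba}-R_{ab}=R_{ba-ab}=0$, that is $R_{ba}=R_{ab}$. Consequently $\Phi(\bar a)\Phi(\bar b)=R_aR_b=R_{ba}=R_{ab}=\Phi(\overline{ab})=\Phi(\bar a\,\bar b)$, so $\Phi$ is an algebra homomorphism; in particular ${\frak C}({\mathcal A})$ is commutative.

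It remains to deal with continuity. From $\|R_b\|=\sup_{\|x\|\le1}\|xb\|\le\|b\|$, which holds for every representative $b$ of a coset, one obtains $\|\Phi(\bar b)\|\le\|\bar b\|$, so $\Phi$ is bounded; since both ${\mathcal A}/\mathrm{rad}({\mathcal A})$ and ${\frak C}({\mathcal A})$ are Banach algebras, the open mapping theorem makes $\Phi^{-1}$ continuous as well, so $\Phi$ is an isomorphism of Banach algebras. I do not expect a genuine obstacle in carrying this out; the only delicate point is the observation in the middle paragraph that the natural map $b\mapsto R_b$ is an anti-homomorphism, and that it is exactly commutativity of ${\mathcal A}/\mathrm{rad}({\mathcal A})$ — in the concrete form of Lemma \ref{lemma3} — that converts it into a homomorphism.
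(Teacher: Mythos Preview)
Your proof is correct and follows essentially the same route as the paper's, only with the isomorphism written in the opposite direction: the paper defines $\Gamma:{\frak C}({\mathcal A})\to{\mathcal A}/\mathrm{rad}({\mathcal A})$ by $\Gamma(\delta)=\pi(\delta(e))$, which is precisely your $\Phi^{-1}$. Both arguments rest on the same two ingredients---Theorem~\ref{theorem8} to identify ${\frak C}({\mathcal A})$ with $\{R_b:b\in{\mathcal A}\}$, and Lemma~\ref{lemma3} (commutativity of ${\mathcal A}/\mathrm{rad}({\mathcal A})$) to convert the anti-homomorphism $b\mapsto R_b$ into a homomorphism---and your version is in fact more explicit than the paper's about injectivity and about continuity of the inverse.
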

\begin{proof}
Let $e$ be right identity of ${\mathcal A}$. One can introduce a bounded linear map $\Gamma: {\frak C}({\mathcal A})\longrightarrow {\mathcal A}/\mathrm{rad}({\mathcal A})$ by $\Gamma(\delta):=\pi(\delta(e))$, where $\pi: {\mathcal A}\rightarrow{\mathcal A}/\mathrm{rad}({\mathcal A})$ is the natural homomorphism. Using Theorem \ref{theorem8}(iv) and Lemma \ref{lemma3}, we conclude that $\delta_{1}\circ \delta_{2}=\delta_{2}\circ \delta_{1}$ for all $\delta_{1},\delta_{2}\in {\frak C}({\mathcal A})$. Hence, ${\frak C}({\mathcal A})$ is a commutative Banach algebra. Therefore, for every $\delta_{1},\delta_{2}\in {\frak C}({\mathcal A})$, we have
\begin{eqnarray*}
	\Gamma(\delta_{1}\circ \delta_{2})&=\pi(\delta_{2}(\delta_{1}(e) e))
	=\pi(\delta_{1}(e) \delta_{2}(e))\\&=\pi(\delta_{1}(e)) \pi(\delta_{2}(e))
	=\Gamma (\delta_{1}) \Gamma (\delta_{2}).
\end{eqnarray*}
Hence, $\Gamma$ is a homomorphism. 
Now we show that $\Gamma$ is onto. Given $\pi(b)=b+\mathrm{rad}({\mathcal A})\in {\mathcal A}/\mathrm{rad}({\mathcal A})$, define $\delta \in {\frak C}({\mathcal A})$ by $\delta(a)=a b$. Then
\begin{eqnarray*}
	\Gamma(\delta)&=&\pi(\delta(e))=\pi(e b)\\
	&=&\pi(e)\pi(b)=\pi(b)\pi(e)\\&=&\pi(be)
	=\pi(b).
\end{eqnarray*}
This proves theorem.
\end{proof}

The maps $T$ and $S$ from ${\mathcal A}$ into ${\mathcal A}$ are called orthogonal, denoted by $T\perp S$, if $T(a)c S(b)=S(b) c T(a)=0$ for all $a,b,c\in {\mathcal A}$.

\begin{theorem}\label{theorem10}
	Let $(\delta,d)$ be a generalized derivation of ${\mathcal A}$. If ${\mathcal A}$ has a right identity, then the following statements are equivalent.
	\begin{itemize}
		\item[\rm(i)]
		$[[\delta(a),a],\delta(a)]\in Z({\mathcal A})$ for all $a\in {\mathcal A}$.
		\item[\rm(ii)]
		$d\perp \delta$.
		\item[\rm(iii)]
		$(\delta^{2},d^{2})$ is a generalized derivation of ${\mathcal A}$.
	\end{itemize}
\end{theorem}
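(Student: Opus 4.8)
The plan is to fix a right identity $e$ of $\mathcal{A}$ and set $v:=\delta(e)$. The argument will rest on two facts supplied by the preceding results: by Theorem \ref{theorem1} we have $d(\mathcal{A})\subseteq\mathrm{rad}(\mathcal{A})=\mathrm{rann}(\mathcal{A})$, so that $x\,d(a)=0$ and hence $d(a)d(b)=0$ and $a\,d(a)=0$ for all $a,b,x\in\mathcal{A}$; and by Lemma \ref{lemma3} one has $cxy=cyx$ for all $c,x,y\in\mathcal{A}$. First I would record that $\delta(a)=\delta(ae)=a\,\delta(e)+d(a)e=av+d(a)$ for every $a\in\mathcal{A}$. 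A short computation then gives $[\delta(a),a]=[av,a]+[d(a),a]=d(a)a$ (here $ava=a^{2}v$ makes $[av,a]=0$, and $a\,d(a)=0$ makes $[d(a),a]=d(a)a$), and therefore
\[
 [[\delta(a),a],\delta(a)]=d(a)a\,\delta(a)=d(a)a^{2}v=d(a)va^{2},
\]
the last equality by Lemma \ref{lemma3}. Since $d(a)a^{2}v\in\mathrm{rad}(\mathcal{A})=\mathrm{rann}(\mathcal{A})$, it is central precisely when it is in addition right-annihilated, so condition (i) is equivalent to $d(a)va^{2}=0$ for all $a\in\mathcal{A}$.

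Next I would reduce (ii) and (iii) to the single statement $d(a)\,\mathcal{A}\,v=\{0\}$ for all $a\in\mathcal{A}$. For (ii): $\delta(b)c\,d(a)=0$ holds automatically because $d(a)\in\mathrm{rann}(\mathcal{A})$, while $d(a)c\,\delta(b)=d(a)c(bv+d(b))=d(a)cbv$, and putting $c=e$ shows that $d\perp\delta$ is equivalent to $d(a)\mathcal{A}v=\{0\}$. For (iii): a direct computation gives $\delta^{2}(ab)=a\,\delta^{2}(b)+d^{2}(a)b+2\,d(a)\delta(b)$, with $d^{2}$ a derivation since $d(a)d(b)=0$; hence $(\delta^{2},d^{2})$ is a generalized derivation if and only if $d(a)\delta(b)=0$ for all $a,b$, that is $d(a)bv=0$ for all $a,b$, i.e.\ again $d(a)\mathcal{A}v=\{0\}$. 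This yields (ii)$\Leftrightarrow$(iii); and since $d(a)\mathcal{A}v=\{0\}$ trivially forces $d(a)va^{2}=d(a)a^{2}v=0$, it also yields (ii),(iii)$\Rightarrow$(i).

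The heart of the proof is (i)$\Rightarrow$(ii): deducing $d(a)\mathcal{A}v=\{0\}$ from the quadratic identity $d(a)va^{2}=0$. Here I would linearize. Replacing $a$ by $a+tb$ (with a scalar $t$) and comparing the coefficient of $t^{2}$, and using $d(b)vab=d(b)vba$ from Lemma \ref{lemma3}, one gets
\[
 d(a)vb^{2}+2\,d(b)vab=0\qquad(a,b\in\mathcal{A}).
\]
Taking $b=e$ and using $e^{2}=e$, $ae=a$, $ve=v$ turns this into $d(a)v=-2\,d(e)va$ for all $a$; then $a=e$ gives $3\,d(e)v=0$, so $d(e)v=0$, and substituting back yields $d(a)v=0$ for all $a$. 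Finally $d(a)cv=d(a)vc=0$ by Lemma \ref{lemma3}, which is exactly $d(a)\mathcal{A}v=\{0\}$; combined with the reductions above this closes the chain (i)$\Rightarrow$(ii)$\Leftrightarrow$(iii)$\Rightarrow$(i).

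The step I expect to be the main obstacle is precisely this linearization: one must expand $d(a+tb)v(a+tb)^{2}$, discard the two terms $d(a)va^{2}$ and $d(b)vb^{2}$ that vanish by hypothesis, and collapse the remaining mixed terms with the rule $cxy=cyx$ before the specializations $b=e$ and $a=e$ can be exploited. (Over $\mathbb{R}$ or $\mathbb{C}$ the scalar $t$ is available; in general one may instead substitute $a\mapsto a\pm b$ and subtract, which uses only that $2$ is invertible.) Everything else is routine bookkeeping with $x\,d(a)=0$, $\mathrm{rad}(\mathcal{A})^{2}=0$ and $cxy=cyx$.
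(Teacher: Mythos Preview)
Your proof is correct and follows the same route as the paper's: both reduce all three conditions to the single identity $d(a)\,\delta(e)=0$ via the formula $[[\delta(a),a],\delta(a)]=d(a)a^{2}\delta(e)$, and then extract that identity from $d(a)a^{2}\delta(e)=0$ by linearization. The only cosmetic differences are that the paper linearizes with the specific substitutions $a\mapsto e$, $a\mapsto a+e$, $a\mapsto -a$ rather than your two-variable polarization $a\mapsto a+tb$, and that your shortcut ``$ve=v$'' is not literally valid---what you actually need, $d(a)ve=d(a)v$, follows instead from Lemma~\ref{lemma3} together with $d(a)e=d(a)$.
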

\begin{proof}
(i) $\Rightarrow$ (ii). Given $a\in {\mathcal A}$, by assumption and Lemma \ref{lemma3}, we have
$$[[\delta(a),a],\delta(a)]=0.$$
Moreover, it is easy to see that $\delta(a)=a \delta(e)+d(a)$, where $e$ is a right identity for ${\mathcal A}$. Now, Theorem \ref{theorem1} and Lemma \ref{lemma3} imply that
\begin{equation}
	[[\delta(a),a],\delta(a)]=d(a) a^{2} \delta(e).\label{r0}
\end{equation}
Thus,
\begin{equation}
	d(a)a^{2}\delta(e)=0.\label{r1}
\end{equation}
Replacing $a$ by $e$ in (\ref{r1}), we get
\begin{equation}
	d(e) \delta(e)=0.\label{r2}
\end{equation}
Replacing $a$ by $a+e$ in (\ref{r1}) and using (\ref{r2}), we obtain
\begin{equation}
	d(a) \delta(e)+2d(a) a \delta(e)=0.\label{r3}
\end{equation}
Putting $-a$ for $a$ in the relation above we have
\begin{equation}
	-d(a)\delta(e)+2d(a) a \delta(e)=0.\label{r4}
\end{equation}
It follows from (\ref{r3}) and (\ref{r4}) that $d(a) \delta(e)=0$. Therefore,
$$d(a)\delta(b)=d(a) b\delta(e)=d(a) \delta(e) b=0,$$
for all $a,b\in {\mathcal A}$. This implies that
\begin{equation}
	d(a)c\delta(b)=d(a)\delta(b)c=0,\label{r5}
\end{equation}
for all $a,b,c\in {\mathcal A}$.
Since $d({\mathcal A})\subseteq{\rm rann}({\mathcal A})$, it follows that
\begin{equation}
	\delta(b)cd(a)=0,\label{r6}
\end{equation}
for all $a,b,c\in {\mathcal A}.$
Thus, by (\ref{r5}) and (\ref{r6}) the statement (ii) holds.

(ii) $\Rightarrow$ (iii).
Suppose that $d$ and $\delta$ are orthogonal. Then for every $a,b\in {\mathcal A}$, we have
\begin{eqnarray*}
	\delta^{2}(a b)&=&\delta(\delta(ab))=\delta(a \delta(b)+d(a) b)\\
	&=&a \delta^{2}(b)+2d(a) \delta(b)+d^{2}(a) b\\
	&=&a \delta^{2}(b)+d^{2}(a) b.
\end{eqnarray*}
This shows that $(\delta^{2},d^{2})$ is a generalized derivation.

(iii) $\Rightarrow$ (i). Assume that $(\delta^{2},d^{2})$ is a generalized derivation of ${\mathcal A}$. Then
\begin{equation}
	\delta^{2}(ab)=a \delta^{2}(b)+d^{2}(a) b, \label{r7}
\end{equation}
for all $ a,b\in {\mathcal A}$. Moreover,
\begin{eqnarray}
	\delta^{2}(a b)&=&\delta(\delta(ab))=\delta(a \delta(b)+d(a) b)\nonumber\\
	&=& a \delta^{2}(b)+d(a)\delta(b)+d(a) \delta(b)+d^{2}(a) b \nonumber\\
	&=& a \delta^{2}(b)+2d(a) \delta(b)+d^{2}(a) b,\label{r8}
\end{eqnarray}
for all $ a,b\in {\mathcal A}$.
Comparing (\ref{r7}) and (\ref{r8}), we get that
\begin{equation}
	d(a)\delta(b)=0, \label{r9}
\end{equation}
for all $a,b\in {\mathcal A}$. Using (\ref{r0}) and (\ref{r9}), we obtain
\begin{eqnarray*}
	[[\delta(a),a],\delta(a)]&=& d(a) a^{2} \delta(e)\\
	&=& d(a) \delta(e) a^{2}=0,
\end{eqnarray*}
for all $a\in {\mathcal A}$ and therefore (i) holds.
\end{proof}

Obviously the product of two right multipliers is a right multiplier, and the product of two derivations of ${\mathcal A}$ is a derivation. The next example shows that this fact does not hold for all generalized derivations of ${\mathcal A}$.
\begin{example}
	{\rm Let $G={\Bbb Z}$. Then $G$ is an infinite discrete amenable group. Thus, 
		$VN(G)^*$ is a  non-commutative Banach algebra with a right identity, say $e$. It is easy to see that $(\delta,d)$ defined as $d=d_e$ and $\delta=R_e+d_e$ is a generalized derivation of $VN(G)^*$.
		Now, by Lemma \ref{lemma1}, there exists $c\in \mathrm{rad}(VN(G)^*)={\rm rann}(VN(G)^*)$ such that $c\neq 0$. It follows that
		\begin{equation}
			\delta^{2}(c)=4c,\quad c \delta^{2}(e)+d^{2}(c) e=2c.\label{rr1}
		\end{equation}
		But $(\delta^{2},d^{2})$ fails to be a generalized derivation. Indeed, if $(\delta^{2},d^{2})$ is a generalized derivation, then by (\ref{rr1}) we deduce that $c=0$ which is a contradiction.}
\end{example}
\section{ $k$-Skew Centralizing Generalized Derivations}
Let $a,b\in {\mathcal A}$ and $k$ be a fixed positive integer. Set
$$\langle a,b\rangle_{k}=\langle \langle a,b\rangle_{k-1},b\rangle,$$
where
$$\langle a,b\rangle_{0}=a,\quad \langle a,b\rangle =a b+b a.$$
A mapping $T: {\mathcal A}\rightarrow {\mathcal A}$ is called $k$-skew centralizing (resp. $k$-skew commuting) if it satisfies
$$\langle T(a),a^{k}\rangle \in Z({\mathcal A})\quad (\hbox{resp.}~ \langle T(a),a^{k}\rangle=0)\quad (a\in {\mathcal A}).$$
In the case $k=1$, $T$ is called skew centralizing (resp. skew commuting).

\begin{theorem}\label{theorem11}
	Let $(\delta,d)$ be a generalized derivation of ${\mathcal A}$ and $k\in {\Bbb N}$. If ${\mathcal A}$ has a right identity, then the following statements hold.
	\begin{itemize}
		\item[\rm(i)]
		If $\delta$ is $k$-skew centralizing, then there exists $z\in Z({\mathcal A})$ such that $\delta=R_z$.
		\item[\rm(ii)]
		If $\delta$ is $k$-skew commuting, then $\delta=0$.
	\end{itemize}
\end{theorem}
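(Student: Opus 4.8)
The plan is to reduce the problem to the structural formula $\delta(a)=af+d(a)$, valid for all $a\in{\mathcal A}$, where $e$ denotes a fixed right identity, $f:=\delta(e)$, and $d(a)\in{\rm rann}({\mathcal A})$ by Theorem~\ref{theorem1}; indeed $\delta(a)=\delta(ae)=a\delta(e)+d(a)e=af+d(a)$. Two elementary facts will be used repeatedly: $Z({\mathcal A})\cap{\rm rann}({\mathcal A})=\{0\}$ (if $v$ lies in both then $v=ve=ev=0$), and $Z({\mathcal A})$ is an ideal of ${\mathcal A}$, since for $z\in Z({\mathcal A})$ and $w,x\in{\mathcal A}$ one has $(wz)x-x(wz)=[w,x]z=z[w,x]=0$ by Lemma~\ref{lemma3} and ${\mathcal A}\cdot{\rm rann}({\mathcal A})=\{0\}$. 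The opening computation is the identity
$$\langle\delta(a),a^{k}\rangle=\delta(a)a^{k}+a^{k}\delta(a)=2a^{k+1}f+d(a)a^{k}\qquad(a\in{\mathcal A}),$$
which follows from $\delta(a)=af+d(a)$, from Lemma~\ref{lemma3} (to replace $afa^{k}$ by $a^{k+1}f$), and from $a^{k}d(a)=0$; here $d(a)a^{k}\in{\rm rann}({\mathcal A})$, the right annihilator being a two-sided ideal.

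For part~(i) I would assume $\langle\delta(a),a^{k}\rangle\in Z({\mathcal A})$ for all $a$ and first take $a=e$. Using $e^{k}=e$ and $d(e)=f-ef$ (from $\delta(e)=\delta(e^{2})=ef+d(e)$) this gives $z_{0}:=f+ef\in Z({\mathcal A})$. Since $e$ is a right identity, $a^{k+1}(ef)=a^{k+1}f$, so $a^{k+1}z_{0}=2a^{k+1}f$; as $z_{0}\in Z({\mathcal A})$ and $Z({\mathcal A})$ is an ideal, $2a^{k+1}f\in Z({\mathcal A})$. Hence $d(a)a^{k}=\langle\delta(a),a^{k}\rangle-2a^{k+1}f\in Z({\mathcal A})\cap{\rm rann}({\mathcal A})=\{0\}$, i.e. $d(a)a^{k}=0$ for every $a$. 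Putting $a=e$ gives $d(e)=0$, hence $ef=f$ and $z_{0}=2f$, so $f\in Z({\mathcal A})$. Finally, replacing $a$ by $a+\lambda e$ in $d(a)a^{k}=0$ (note $d(a+\lambda e)=d(a)$ because $d(e)=0$) and comparing coefficients of $\lambda^{k}$ yields $d(a)e^{k}=d(a)=0$; thus $d\equiv0$ and $\delta(a)=af=R_{f}(a)$ with $z:=f=\delta(e)\in Z({\mathcal A})$.

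For part~(ii), a $k$-skew commuting $\delta$ is in particular $k$-skew centralizing, so by (i) $\delta=R_{z}$ with $z=\delta(e)\in Z({\mathcal A})$ and the associated derivation is zero; the identity above then reads $\langle\delta(a),a^{k}\rangle=2a^{k+1}z$, and evaluating the hypothesis at $a=e$ forces $2z=0$, so $z=0$ and $\delta=0$.

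The step I expect to be the main obstacle is the extraction of $d(a)a^{k}=0$ from the hypothesis: this is only possible once $2a^{k+1}f$ is known to be central, which is why the substitution $a=e$ (yielding the central element $z_{0}$) and the remark that $Z({\mathcal A})$ absorbs multiplication by ${\mathcal A}$ are carried out first. After that, passing to $d\equiv0$, to $\delta=R_{\delta(e)}$ with $\delta(e)$ central, and to (ii), is routine.
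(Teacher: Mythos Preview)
Your proof is correct, but it takes a different route from the paper's. The paper argues via a commutator identity: it shows (by a short induction) that $[\langle\delta(a),a^{i}\rangle,a]=[\delta(a),a^{i+1}]$, so the $k$-skew centralizing hypothesis immediately gives $[\delta(a),a^{k+1}]=0$, i.e.\ $\delta$ is $(k+1)$-commuting, and then Theorem~\ref{theorem8} is invoked as a black box to conclude $\delta=R_b$ for some $b\in{\mathcal A}$; evaluating the hypothesis at $e$ then yields $eb\in Z({\mathcal A})$ and $\delta=R_{eb}$. Your argument instead works directly with the decomposition $\delta(a)=af+d(a)$, computes $\langle\delta(a),a^{k}\rangle=2a^{k+1}f+d(a)a^{k}$ explicitly, and uses the substitution $a=e$ together with the ideal property of $Z({\mathcal A})$ (essentially Lemma~\ref{lemma*}) to isolate $d(a)a^{k}$ in $Z({\mathcal A})\cap{\rm rann}({\mathcal A})=\{0\}$, after which a polynomial-in-$\lambda$ trick forces $d=0$. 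The paper's approach is shorter because it recycles Theorem~\ref{theorem8}; yours is more self-contained and avoids that dependency, at the price of reproving a special case of it by hand. Part~(ii) is handled identically in both.
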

\begin{proof}
(i). By induction, we obtain
\begin{equation}
	[\langle \delta(a),a^{i}\rangle ,a]=[\delta(a),a^{i+1}], \label{f1f}
\end{equation}
for all $ a\in {\mathcal A}$ and $i\in {\Bbb N}$.
Now suppose that $\delta$ is $k$-skew centralizing. Then $\langle \delta(a),a^{k}\rangle \in Z({\mathcal A})$ for all $a\in {\mathcal A}$. Using (\ref{f1f}), we have
$$0=[\langle \delta(a),a^{k}\rangle,a]=[\delta(a),a^{k+1}],$$
for all $a\in {\mathcal A}$. This shows that $\delta$ is $(k+1)$-commuting. It follows from Theorem \ref{theorem8}, that there exists $b\in {\mathcal A}$ such that $\delta=R_b$.
Let $e$ be a right identity of ${\mathcal A}$. Then
$$2e b=\langle \delta(e),e^{k}\rangle \in Z({\mathcal A}).$$
This implies that
$$\delta(a)=a b=a(e b)=R_{e b}(a),$$
for all $ a\in {\mathcal A}$, which completes the proof of (i).

(ii). Let $\delta$ be a $k$-skew commuting generalized derivation of ${\mathcal A}$. By (i), there exists $z\in Z({\mathcal A})$ such that $\delta=R_z$. If $e$ is a right identity of ${\mathcal A}$, then
\begin{eqnarray*}
	0&=&\langle \delta(e),e^{k}\rangle =\langle e z,e\rangle =e z e+e ez\\
	&=&2e z=2z e=2z.
\end{eqnarray*}
Therefore, $\delta(a)=az=0$ for all $a\in {\mathcal A}$.
\end{proof}

\begin{lemma}\label{lemma*}
	$Z({\mathcal A})$ is a closed ideal of ${\mathcal A}$.
\end{lemma}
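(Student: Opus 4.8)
The plan is to show directly that $Z({\mathcal A})$ is closed and is a two-sided ideal, the latter being immediate from Lemma~\ref{lemma3}. First I would prove that $Z({\mathcal A})$ is an ideal: given $z\in Z({\mathcal A})$ and $a\in{\mathcal A}$, I must check $az\in Z({\mathcal A})$, i.e. $(az)b=b(az)$ for all $b\in{\mathcal A}$. Using $z\in Z({\mathcal A})$ we have $(az)b=a(zb)=a(bz)$, and by Lemma~\ref{lemma3} (with the roles $c=a$, and commuting $b$ with $z$) we get $a(bz)=a(zb)$ trivially, so instead the point is $azb = abz$; applying Lemma~\ref{lemma3} to $azb$ we may swap $z$ and $b$ to obtain $azb=abz$, and then $abz=b(az)$ again since $z$ is central with respect to the product $ab$. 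Hence $az\in Z({\mathcal A})$, and similarly (or by the same computation) $za\in Z({\mathcal A})$. So $Z({\mathcal A})$ is a two-sided ideal.

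Next I would prove that $Z({\mathcal A})$ is norm-closed. Let $(z_n)$ be a sequence in $Z({\mathcal A})$ converging in norm to some $z\in{\mathcal A}$. For any fixed $b\in{\mathcal A}$, the maps $x\mapsto xb$ and $x\mapsto bx$ are continuous on ${\mathcal A}$, so $z_nb\to zb$ and $bz_n\to bz$; since $z_nb=bz_n$ for every $n$, passing to the limit gives $zb=bz$. As $b$ was arbitrary, $z\in Z({\mathcal A})$, which shows $Z({\mathcal A})$ is closed. This part is completely routine and requires nothing beyond continuity of multiplication in a Banach algebra.

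I do not anticipate a genuine obstacle here; the only place requiring care is the ideal property, where one must invoke Lemma~\ref{lemma3} rather than naive commutativity (since ${\mathcal A}$ itself need not be commutative). The cleanest way to present it is: for $z\in Z({\mathcal A})$, $a,b\in{\mathcal A}$,
\begin{eqnarray*}
b(az) &=& (ba)z = (ab)z \qquad \text{(Lemma \ref{lemma3}, with the central-type swap via } [a,b]\in{\rm rann}({\mathcal A})) \\
&=& a(bz) = a(zb) = (az)b,
\end{eqnarray*}
wait — to be safe I would instead argue $azb=abz$ using $azb = a(zb)=a(bz)$ (as $z$ central) and then note $b(az)=(ba)z=(ab)z=abz$ by Lemma~\ref{lemma3}, giving $b(az)=azb=(az)b$. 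The symmetric statement for $za$ follows identically. With the ideal and closedness properties both established, the lemma is proved. The expected length of the full proof is only a few lines.
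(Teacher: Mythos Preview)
Your approach is correct and matches the paper's: both invoke Lemma~\ref{lemma3} (giving $cab=cba$ for all $a,b,c$) together with centrality of $z$ to verify the ideal property, and treat closedness as routine via continuity of multiplication. The only step to tighten is $(ba)z=(ab)z$, which needs both Lemma~\ref{lemma3} \emph{and} $z\in Z({\mathcal A})$ (since $[a,b]\in{\rm rann}({\mathcal A})$ only gives $z[a,b]=0$ after commuting $z$ to the left); the paper records this as $(bz)a=zba=zab=azb=a(bz)$.
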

\begin{proof}
It is easy to check that $Z({\mathcal A})$ is a closed subspace of ${\mathcal A}$. To complete the proof, we show that $Z({\mathcal A})$ is an ideal in ${\mathcal A}$. By lemma \ref{lemma3}, we obtain
$$(z b) a=z a b=a(z b),$$
for all $z\in Z({\mathcal A})$ and $a,b\in {\mathcal A}$.
Similarly,
$$(b z)a =z b a=z a b=a z b=a(b z),$$
for all $z\in Z({\mathcal A})$ and $a,b\in {\mathcal A}$, which implies that $Z({\mathcal A})$ is an ideal in ${\mathcal A}$.
\end{proof}

\begin{corollary}\label{corollary**}
	Let $(\delta,d)$ be a generalized derivation of ${\mathcal A}$. If ${\mathcal A}$ has a right identity, then the following statements hold.
	\begin{itemize}
		\item[\rm(i)]
		$\delta$ is skew centralizing.
		\item[\rm(ii)]
		For every $k\in {\Bbb N}$, $\delta$ is $k$-skew centralizing.
		\item[\rm(iii)]
		There exists $k\in {\Bbb N}$ such that $\delta$ is $k$-skew centralizing.
	\end{itemize}
\end{corollary}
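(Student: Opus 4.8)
The plan is to establish the equivalence of statements (i), (ii) and (iii); the content lies entirely in the implication (iii)$\Rightarrow$(ii), the other two being essentially tautological.

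First I would observe that statement (i) is precisely the case $k=1$ of the property appearing in (iii) and (ii). Hence (ii)$\Rightarrow$(i) (take $k=1$) and (i)$\Rightarrow$(iii) (the value $k=1$ witnesses the existential statement) are immediate.

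Next, for (iii)$\Rightarrow$(ii), suppose $\delta$ is $k$-skew centralizing for some $k\in{\Bbb N}$. By Theorem \ref{theorem11}(i) there is $z\in Z({\mathcal A})$ with $\delta=R_z$, that is, $\delta(a)=az$ for all $a\in{\mathcal A}$. I would then fix an arbitrary $m\in{\Bbb N}$ and $a\in{\mathcal A}$ and compute, using the centrality of $z$ (or, alternatively, Lemma \ref{lemma3} to permute the factors of the triple products),
\begin{align*}
\langle\delta(a),a^{m}\rangle &=\delta(a)a^{m}+a^{m}\delta(a)=(az)a^{m}+a^{m}(az)\\
&=a^{m+1}z+a^{m+1}z=2a^{m+1}z.
\end{align*}
By Lemma \ref{lemma*}, $Z({\mathcal A})$ is an ideal of ${\mathcal A}$, so $a^{m+1}z\in Z({\mathcal A})$ and therefore $\langle\delta(a),a^{m}\rangle\in Z({\mathcal A})$. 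Since $a$ and $m$ were arbitrary, $\delta$ is $m$-skew centralizing for every $m\in{\Bbb N}$, which is (ii).

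The step carrying the whole argument is Theorem \ref{theorem11}(i): it is what converts the hypothesis of (iii) — being $k$-skew centralizing for a single value of $k$ — into the rigid description $\delta=R_z$ with $z$ central. Once that is in hand, no estimates and no further structural input are needed, since the ideal property of $Z({\mathcal A})$ from Lemma \ref{lemma*} closes the loop. So I do not anticipate a genuine obstacle here; the statement is a corollary in the strict sense, extracting the uniform-in-$k$ conclusion from the work already done in Theorem \ref{theorem11}.
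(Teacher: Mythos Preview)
Your proof is correct and follows essentially the same approach as the paper: both use Theorem \ref{theorem11}(i) to reduce to $\delta=R_z$ with $z\in Z({\mathcal A})$ and then invoke Lemma \ref{lemma*} to conclude that $\langle\delta(a),a^{m}\rangle\in Z({\mathcal A})$. The only cosmetic difference is that the paper runs the cycle (i)$\Rightarrow$(ii)$\Rightarrow$(iii)$\Rightarrow$(i), applying Theorem \ref{theorem11} twice, whereas you close the loop with a single application in (iii)$\Rightarrow$(ii).
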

\begin{proof}
(i) $\Rightarrow$ (ii)
By Theorem \ref{theorem11}, there exists $z\in Z({\mathcal A})$ such that $\delta(a)=a z$ for all $a\in {\mathcal A}$. Hence by Lemma \ref{lemma*}, we have
$$\langle \delta(a),a^{k}\rangle =\langle a z,a^{k}\rangle =a za^{k}+a^{k} a z\in Z({\mathcal A}),$$
for all $a\in {\mathcal A}$, $k\in {\Bbb N}$.

(ii) $\Rightarrow$ (iii).
This is obvious.

(iii) $\Rightarrow$ (i).
By Theorem \ref{theorem11}, there exists $z\in Z({\mathcal A})$ such that $\delta(a)=a z$ for all $a\in {\mathcal A}$. Therefore by Lemma \ref{lemma*}, we have
$$\langle \delta(a),a\rangle=\langle a z,a\rangle =a z a+a a z\in Z({\mathcal A}),$$
for all $a\in {\mathcal A}$.
\end{proof}

By ${S\frak{C}}({\mathcal A})$ we denote the set of all skew centralizing generalized derivations on ${\mathcal A}$. By Theorem \ref{theorem11} and Lemma \ref{lemma*} we conclude that the space $S\frak{C}({\mathcal A})$ is a closed ideal of $\frak{C}({\mathcal A})$, the space of all centralizing generalized derivations of ${\mathcal A}$.
\begin{corollary}\label{corollary***}
	Suppose that ${\mathcal A}$ has a right identity.	Then the Banach algebra $S{\frak C}({\mathcal A})$ is  isomorphic to $Z({\mathcal A})$.
\end{corollary}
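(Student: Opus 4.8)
The plan is to build on Theorem~\ref{theorem9}, which already gives an isomorphism $\Gamma\colon {\frak C}({\mathcal A})\to {\mathcal A}/\mathrm{rad}({\mathcal A})$ sending $\delta$ to $\pi(\delta(e))$ for a fixed right identity $e$, and to show that the restriction of $\Gamma$ to $S{\frak C}({\mathcal A})$ is an isomorphism onto $Z({\mathcal A})$. First I would recall from Theorem~\ref{theorem11}(i) that every $\delta\in S{\frak C}({\mathcal A})$ has the form $\delta=R_z$ for some $z\in Z({\mathcal A})$; conversely, by Corollary~\ref{corollary**} (or directly, using Lemma~\ref{lemma*}), for any $z\in Z({\mathcal A})$ the map $R_z$ is a skew centralizing generalized derivation, so $R_z\in S{\frak C}({\mathcal A})$. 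Moreover, since $e$ is a right identity, $\Gamma(R_z)=\pi(R_z(e))=\pi(ez)$, and as $z$ is central and $\pi$ is a homomorphism we get $\pi(ez)=\pi(z)\pi(e)=\pi(ze)=\pi(z)$. Thus $\Gamma(S{\frak C}({\mathcal A}))=\pi(Z({\mathcal A}))$.

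The next step is to identify $\pi(Z({\mathcal A}))$ with $Z({\mathcal A})$, i.e.\ to check that $\pi$ restricted to $Z({\mathcal A})$ is injective and that $Z({\mathcal A})\cap\mathrm{rad}({\mathcal A})=\{0\}$. If $z\in Z({\mathcal A})\cap\mathrm{rad}({\mathcal A})=Z({\mathcal A})\cap{\rm rann}({\mathcal A})$, then on the one hand $az=za$ for all $a$, and on the other hand $az=0$ for all $a$; taking $a=e$ (a right identity) forces $z=ez=0$. Hence $\pi|_{Z({\mathcal A})}$ is injective, so $\Gamma$ maps $S{\frak C}({\mathcal A})$ bijectively onto the subalgebra $\pi(Z({\mathcal A}))$, which is isometrically linearly identified with $Z({\mathcal A})$ via $\pi$. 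Since $\Gamma$ is already known to be a continuous algebra homomorphism (from the proof of Theorem~\ref{theorem9}), its restriction is a continuous algebra isomorphism of $S{\frak C}({\mathcal A})$ onto $Z({\mathcal A})$ (composing with $(\pi|_{Z({\mathcal A})})^{-1}$), and one should remark that multiplication in $S{\frak C}({\mathcal A})$ corresponds to multiplication in $Z({\mathcal A})$: for $R_z,R_w$ with $z,w\in Z({\mathcal A})$, Lemma~\ref{lemma3} gives $R_z\circ R_w(a)=R_z(aw)=awz=R_{zw}(a)$, so the composition is again $R_{zw}\in S{\frak C}({\mathcal A})$, consistent with $\Gamma(R_z\circ R_w)=\pi(zw)$.

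Finally, to deliver the statement as phrased — ``isomorphic to $Z({\mathcal A})$'' — it suffices to assemble the above: the map $z\mapsto R_z$ is a well-defined bijection $Z({\mathcal A})\to S{\frak C}({\mathcal A})$ by Theorem~\ref{theorem11}(i) and Corollary~\ref{corollary**}, it is linear and multiplicative by the computation $R_z\circ R_w=R_{zw}$, and it is bounded with bounded inverse (the inverse being, up to the identification via $\pi$, the restriction of the bounded map $\Gamma$ from Theorem~\ref{theorem9}). I expect the only mildly delicate point to be bookkeeping the direction of the composition and confirming boundedness of the inverse map $z\mapsto R_z$; boundedness of $z\mapsto R_z$ itself is clear since $\|R_z\|\le\|z\|$, and boundedness in the other direction follows because $\Gamma$ is bounded and $\pi|_{Z({\mathcal A})}$ is a linear isomorphism onto a closed subspace (using that $Z({\mathcal A})$ is closed by Lemma~\ref{lemma*}). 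No genuinely new estimate is needed; the proof is essentially a diagram-chase through Theorems~\ref{theorem9} and~\ref{theorem11} together with Lemmas~\ref{lemma3} and~\ref{lemma*}.
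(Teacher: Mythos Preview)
Your proposal is correct and arrives at the same isomorphism as the paper, namely the map $\delta\mapsto z$ where $\delta=R_z$ with $z\in Z({\mathcal A})$ (equivalently, its inverse $z\mapsto R_z$). The paper's proof is simply more direct: it defines $\Gamma:S{\frak C}({\mathcal A})\to Z({\mathcal A})$ by $\Gamma(\delta)=z$ when $\delta=R_z$, checks well-definedness via $z=R_z(e)$ (using $ez=ze=z$ for central $z$), and leaves the remaining verifications to the reader, whereas you route through the quotient map of Theorem~\ref{theorem9} and then identify $\pi(Z({\mathcal A}))$ back with $Z({\mathcal A})$ via $Z({\mathcal A})\cap\mathrm{rad}({\mathcal A})=\{0\}$. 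One small remark: your appeal to ``$\pi|_{Z({\mathcal A})}$ is a linear isomorphism onto a closed subspace'' to get boundedness of the inverse is unnecessary and not obviously justified; the cleaner argument (which you essentially also give) is that the inverse of $z\mapsto R_z$ is evaluation at $e$, i.e.\ $\delta\mapsto\delta(e)$, which is manifestly bounded.
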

\begin{proof} We show that the mapping $\Gamma:S{\frak C}({\mathcal A})\longrightarrow Z({\mathcal A})$ defined by $\Gamma(\delta):=z$, where $\delta=R_z$ for some $z\in Z({\mathcal A})$, is an  isomorphism. To prove this, let $ \delta_{1},\delta_{2} \in S{\frak C}({\mathcal A})$ such that $\delta_{1}=\delta_{2}$. Then by Theorem \ref{theorem11} there exist $z_{1},z_{2}\in Z({\mathcal A})$ such that $\delta_{1}=R_{z_1}$ and  $\delta_{2}=R_{z_2}$. Now, suppose that $e$ is a right identity of ${\mathcal A}$. Then $z_1=\delta_{1}(e)=\delta_{2}(e)=z_2$, whence $\Gamma$ is well-defined. One can easily check that $\Gamma$ is an isomorphism. 
\end{proof}

Before giving the following result, recall that $(L_{b},d_{-b})$ is a generalized derivation of ${\mathcal A}$ for all $b\in {\mathcal A}$.
\begin{theorem}\label{theorem12}
	Let $z\in {\mathcal A}$ and ${\mathcal A}$ has a right identity. Then the following statements are equivalent.
	\begin{itemize}
		\item[\rm(i)]
		$L_{z}$ is centralizing.
		\item[\rm(ii)]
		$L_{z}$ is skew centralizing.
		\item[\rm(iii)]
		$z\in Z({\mathcal A})$.
	\end{itemize}
\end{theorem}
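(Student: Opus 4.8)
The plan is to regard $L_{z}$ as the generalized derivation $(L_{z},d_{-z})$, as recalled just before the statement, so that the structure results for generalized derivations on algebras with a right identity apply directly to $\delta=L_{z}$. I would prove $\mathrm{(iii)}\Rightarrow\mathrm{(i)}$ and $\mathrm{(iii)}\Rightarrow\mathrm{(ii)}$ by a direct computation, and then close the equivalences with $\mathrm{(i)}\Rightarrow\mathrm{(iii)}$ via Theorem~\ref{theorem8} and $\mathrm{(ii)}\Rightarrow\mathrm{(iii)}$ via Theorem~\ref{theorem11}.

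For $\mathrm{(iii)}\Rightarrow\mathrm{(i)}$ and $\mathrm{(iii)}\Rightarrow\mathrm{(ii)}$: if $z\in Z({\mathcal A})$ then $az=za$, so $aza=a^{2}z=za^{2}$, whence $[L_{z}(a),a]=za^{2}-aza=0\in Z({\mathcal A})$ and $\langle L_{z}(a),a\rangle=za^{2}+aza=2a^{2}z$, which lies in $Z({\mathcal A})$ because $Z({\mathcal A})$ is an ideal by Lemma~\ref{lemma*}. Thus $L_{z}$ is both centralizing and skew centralizing, and both verifications are one line.

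For $\mathrm{(i)}\Rightarrow\mathrm{(iii)}$: if $L_{z}$ is centralizing then it is $1$-centralizing, so Theorem~\ref{theorem8}, applied to the generalized derivation $(L_{z},d_{-z})$ (this is where the right identity is used), shows that $L_{z}$ is a right multiplier, i.e.\ $z(ab)=a(zb)$ for all $a,b\in{\mathcal A}$. Putting $b=e$ for a right identity $e$ and using $ze=z$ and $ae=a$ gives $za=a(ze)=az$ for every $a$, so $z\in Z({\mathcal A})$.

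For $\mathrm{(ii)}\Rightarrow\mathrm{(iii)}$: if $L_{z}$ is skew centralizing it is $1$-skew centralizing, so Theorem~\ref{theorem11}(i) supplies $w\in Z({\mathcal A})$ with $L_{z}=R_{w}$; evaluating at a right identity $e$ gives $z=ze=L_{z}(e)=R_{w}(e)=ew=we=w$, using $ew=we$ (centrality of $w$) and $we=w$ (right identity), so $z=w\in Z({\mathcal A})$. None of this is deep once the earlier theorems are in hand; the only point requiring care (and the likeliest source of a computational slip rather than a genuine obstacle) is that $e$ is merely a right identity, so one may use $ae=a$ and $ze=z$ but not $ea=a$, which is why every cancellation of $e$ above is arranged with $e$ on the right.
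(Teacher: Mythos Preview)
Your proof is correct and follows essentially the same approach as the paper: you use Theorem~\ref{theorem8} for $\mathrm{(i)}\Rightarrow\mathrm{(iii)}$, Theorem~\ref{theorem11}(i) for $\mathrm{(ii)}\Rightarrow\mathrm{(iii)}$, and direct computation together with Lemma~\ref{lemma*} for the implications from $\mathrm{(iii)}$, exactly as the paper does. The only difference is the order of presentation and your explicit remark about care with the one-sided identity, which is a helpful clarification but not a departure in method.
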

\begin{proof}
(i)$\Rightarrow$(iii). By assumption and Theorem \ref{theorem8} we obtain that $L_{z} $ is a right multiplier. Thus
$$za=L_{z}(a)=L_{z}(a e)=aL_{z}(e)=a (z e)=az,$$
for all $a\in {\mathcal A}$. This implies $z\in Z({\mathcal A})$.

(ii) $\Rightarrow$ (iii).
Suppose that $L_{z}$ is skew centralizing. Then by Theorem \ref{theorem11}, there exists $z'\in Z({\mathcal A})$ such that $L_{z}(a)=az'$ for all $a\in {\mathcal A}$. Thus
$$z=ze=L_{z}(e)=e z'=z' e=z'\in Z({\mathcal A}).$$

(iii) $\Rightarrow$ (i).
Let $z\in Z({\mathcal A})$. Then
\begin{eqnarray*}
	[L_{z}(a),a]&=&[z a,a]=za a-a z a\\
	&=&z a a-za a=0\in Z({\mathcal A}),
\end{eqnarray*}
for all $a\in {\mathcal A}$.

(iii) $\Rightarrow$ (ii).
Let $z\in Z({\mathcal A})$. By Lemma \ref{lemma*}, we have
$za^{2}\in Z({\mathcal A})$ for all $a\in {\mathcal A}$. Thus
\[
\langle L_{z}(a),a\rangle =2z a^{2}\in Z({\mathcal A}),
\]
for all $a\in {\mathcal A}$.
\end{proof}

\begin{theorem}\label{theorem14}
	Let $T$ be a left multiplier of ${\mathcal A}$. Then $T({\mathcal A})\subseteq\mathrm{rad}({\mathcal A})$ if and only if $T$ is spectrally infinitesimal.
\end{theorem}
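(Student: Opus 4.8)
The plan is to prove the two implications separately, using only the standing hypotheses $\mathrm{rad}({\mathcal A})={\rm rann}({\mathcal A})$ and the commutativity — hence semisimplicity — of ${\mathcal A}/\mathrm{rad}({\mathcal A})$; no right identity is needed.

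For the implication $T({\mathcal A})\subseteq\mathrm{rad}({\mathcal A})\Rightarrow T$ spectrally infinitesimal, I would argue exactly as in Corollary \ref{theorem2}(ii). Fix $a\in{\mathcal A}$. Then $T(a)\in\mathrm{rad}({\mathcal A})={\rm rann}({\mathcal A})$, so $bT(a)=0$ for every $b\in{\mathcal A}$; choosing $b=T(a)$ gives $T(a)^{2}=0$, so $T(a)$ is nilpotent and $r(T(a))=0$. Hence $r(T(a))\le 0\cdot r(a)$ for all $a$, i.e. $T$ is spectrally infinitesimal. (Note this direction does not even use that $T$ is a left multiplier.)

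For the converse, assume $r(T(a))=0$ for every $a\in{\mathcal A}$, and let $\pi:{\mathcal A}\to{\mathcal A}/\mathrm{rad}({\mathcal A})$ be the canonical quotient map, a norm-decreasing algebra homomorphism. For each $a$ and each $n\in{\Bbb N}$ one has $\|\pi(T(a))^{n}\|=\|\pi(T(a)^{n})\|\le\|T(a)^{n}\|$, so the spectral radius formula yields $r(\pi(T(a)))\le r(T(a))=0$. Since ${\mathcal A}/\mathrm{rad}({\mathcal A})$ is a commutative semisimple Banach algebra, its spectral radius is a norm (it coincides with the Gelfand norm), so $r(\pi(T(a)))=0$ forces $\pi(T(a))=0$, that is, $T(a)\in\mathrm{rad}({\mathcal A})$. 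As $a$ is arbitrary, $T({\mathcal A})\subseteq\mathrm{rad}({\mathcal A})$.

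The computations are minimal; the only point that needs care is the converse, where one must resist concluding ``$r(T(a))=0\Rightarrow T(a)\in\mathrm{rad}({\mathcal A})$'' directly, since that fails in a general Banach algebra. The honest route is to transport the estimate into the commutative semisimple quotient and exploit monotonicity of the spectral radius under $\pi$; alternatively one could invoke the radical characterization in \cite[Proposition 1(ii)]{bon}, as in the proof of Theorem \ref{theorem13}, but the quotient argument is self-contained.
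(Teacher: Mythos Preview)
Your argument is correct, and in fact neither direction uses that $T$ is a left multiplier: under the standing hypotheses you have shown that $T({\mathcal A})\subseteq\mathrm{rad}({\mathcal A})$ is equivalent to $r(T(a))=0$ for all $a$, for an \emph{arbitrary} map $T$. The paper's proof differs only in the converse. Rather than passing to the quotient, it exploits the multiplier identity to obtain $r(T(a)b)=r(T(ab))=0$ for every $b$, and then invokes the characterization \cite[Proposition~1(ii)]{bon} (namely $x\in\mathrm{rad}({\mathcal A})$ whenever $r(xb)=0$ for all $b$). Your route is more self-contained and reveals that the left-multiplier hypothesis is in fact redundant given the standing assumption that ${\mathcal A}/\mathrm{rad}({\mathcal A})$ is commutative; the paper's route, by contrast, makes no use of that commutativity and would go through verbatim in an arbitrary Banach algebra, so it shows exactly what the multiplier property buys in the general setting. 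One small stylistic point: your forward direction appeals to $\mathrm{rad}({\mathcal A})={\rm rann}({\mathcal A})$ to get $T(a)^2=0$, but this is unnecessary, since every element of the Jacobson radical of any Banach algebra is quasi-nilpotent.
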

\begin{proof} Suppose that $T({\mathcal A})\subseteq\mathrm{rad}({\mathcal A})$. By \cite[Proposition 1(i)]{bon}, we have
$r(T(a))=0$ for all $a\in {\mathcal A}$. This shows that $T$ is spectrally infinitesimal. Conversely, assume that
$T$ is spectrally infinitesimal. Then
$$r(T(a) b)=r(T(a b))=0,$$
for all $a,b\in {\mathcal A}$. Hence, \cite[Proposition 1(ii)]{bon} implies that $T(a)\in \mathrm{rad}({\mathcal A})$ for all $a\in {\mathcal A}$.
\end{proof}

By Theorem \ref{theorem12} we deduce that, the left multiplier $L_{z}$ is both centralizing and skew centralizing on ${\mathcal A}$ for all $z\in Z({\mathcal A})$. But this assertion is not true for all $z\in {\mathcal A}$ as shown in the following example.

\begin{example}\label{corollary8}
	{\rm  Let ${\mathcal A}=VN({\Bbb Z})^*$. Then there exists an element $z\in {\mathcal A}$ such that $L_{z}$ is neither centralizing nor skew centralizing. In fact, by Lemma \ref{lemma1} there exists $z\in {\rm rann}({\mathcal A})$ such that $z\neq 0$. If $L_{z}$ is either centralizing or skew centralizing, then $z\in Z({\mathcal A})$, by Theorem \ref{theorem12}. Now let $e$ be a right identity of ${\mathcal A}$. Then
		$z=z e=e z=0$. This contradiction will show that $L_z$ is neither centralizing nor skew centralizing.}
\end{example}

%{\bf Acknowledgement.} 
%The author would like to sincerely thank the referee of the
%paper for his valuable comments and constructive suggestions. He acknowledge
%that this research was partially carried out at the IPM-Isfahan Branch.

\bibliographystyle{amsplain}

\end{document}